\documentclass[12pt]{article}
\usepackage[utf8]{inputenc}
\usepackage{graphicx}
\usepackage{amsmath,amssymb,amsthm}
\usepackage{hhline}
\usepackage{wrapfig}  
\usepackage{appendix}

\renewcommand\le{\leqslant}
\renewcommand\ge{\geqslant}
\newcommand{\R}{\mathbb{R}}
\newtheorem{theorem}{Theorem}
\newtheorem*{theorem*}{Theorem}
\newtheorem*{theoremA}{Theorem A}
\newtheorem*{theoremB}{Theorem B}
\newtheorem{lemma}{Lemma}
\newtheorem{computation}{Computation}
\newtheorem*{corollary}{Corollary}
\newtheorem{statement}{Statement}
\theoremstyle{definition}
\newtheorem*{definition}{Definition}
\newcommand{\eps}{\varepsilon}

\DeclareMathOperator{\dil}{dil}
\DeclareMathOperator{\diam}{diam}
\DeclareMathOperator{\WD}{WD}
\DeclareMathOperator{\vol}{vol}

\title{Search of fractal space-filling curves with minimal dilation}
\author{Yuri Malykhin\thanks{Steklov Mathematical Institute. Corresponding
author: malykhin@mi-ras.ru}, Evgeny Shchepin\thanks{Steklov Mathematical
Institute}}

\begin{document}
\maketitle
\abstract{We introduce an algorithm for a search of extremal fractal curves in
large curve classes. It heavily uses SAT-solvers~--- heuristic algorithms that
find models for CNF boolean formulas. Our algorithm was implemented and applied
to the search of fractal surjective curves $\gamma\colon[0,1]\to[0,1]^d$ with
minimal dilation
$$
\sup_{t_1<t_2}\frac{\|\gamma(t_2)-\gamma(t_1)\|^d}{t_2-t_1}.
$$

We report new results of that search in the case of Euclidean norm.
We have found a new curve that we call ``YE'', a self-similar
(monofractal) plane curve of genus $5\times 5$ with dilation
$5\frac{43}{73}=5.5890\ldots$. 
In dimension $3$ we have found facet-gated bifractals (that we call
``Spring'') of genus $2\times2\times 2$ with dilation $<17$. In dimension $4$ we
obtained that there is a curve with dilation $<62$.

Some lower bounds on the dilation for wider classes of cubically
decomposable curves are proved.}

\section{Introduction}

A $d$-dimensional space-filling curve is a continuous map
$\gamma\colon[0,1]\to\mathbb R^d$ having image with non-empty interior. The
first examples of such curves for $d=2,3$ were constructed by G.~Peano in 1890.
In the computer era space-filling curves have found many practical applications,
see~\cite{B13}. The point is that these curves allow one to efficiently order
multidimensional lattices, which is important in some computations.

The constructions of space-filling curves are based on the idea of
self-similarity. Let us consider curves with cubic image $[0,1]^d$. Suppose that
the cube is divided on a number of sub-cubes and the curve on each
sub-cube (``fraction'') is similar to the whole curve. Such curve is a \textit{monofractal} and
the number of fractions is curve's \textit{genus}.
In this paper we consider also multifractal curves. We say that a tuple of
curves is a \textit{multifractal}, if each fraction of each curve is similar to
one of the curves (the precise definition is given below). This class is rich
enough to contain many interesting examples, and simple enough to make
computations with it.

An important property of Peano curves is their H\"older continuity.
It is easy to see that space-filling curves $\gamma\colon[0,1]\to\R^d$ are not
$\alpha$-H\"older for $\alpha>1/d$, but fractal curves are indeed
$(1/d)$-H\"older. See~\cite{S10} for more details. So we have a natural measure
of curve smoothness, or ``locality'',
$$
\sup_{t_1<t_2}\frac{\|\gamma(t_2)-\gamma(t_1)\|^d}{t_2-t_1},
$$
that we call the ``dilation''.
In this paper we treat the case of the Euclidean norm $\|x\|=\|x\|_2$.

We introduce an algorithm for a search of extremal fractal curves in
large curve classes. It relies on SAT-solvers~--- heuristic algorithms that
find models for CNF boolean formulas. We used our algorithm to find some new
curves with small dilation.

The monofractal Peano curve of genus $2\times 2$ is unique (up to isometry).
This is the most popular plane--filling curve~--- the Hilbert curve. It has
dilation $6$, as proved by K. Bauman~\cite{B06}.
Peano monofractals of genus $3\times 3$ were studied in detail in the papers of
E.V.~Shchepin and K.~Bauman~\cite{SB08,B11}. It was proven that the minimal
dilation equals $5\frac23$ and the unique minimal diagonal curve was
found. The same curve was found independently by H.~Haverkort and
F.~Walderveen~\cite{HW10} and named ``Meurthe''.
On the other hand, E.V.~Shchepin~\cite{S04} and K.~Bauman~\cite{B14} proved that any
Peano monofractal has the dilation at least $5$.
The best known plane Peano multifractal is the $\beta\Omega$ curve, which has
dilation $5$.

One of our results is a new curve that we call ``YE'', a 
monofractal plane curve of genus $5\times 5$ with dilation
$5\frac{43}{73}=5.5890\ldots$, which is best-known among plane
monofractals. Moreover, the YE-curve is the unique minimal
curve among monofractals of genus $\le 6\times 6$.  We give a proof of
minimality, which relies both on computations and theoretical
results. This result (together with the algorithm) was announced in~\cite{MS20}.

In the 3D case much of the research was done by H.~Haverkort. We refer to his
papers~\cite{H11} and~\cite{H17}.

We consider classes of facet-gated multifractals, i.e. multifractals with
entrances and exits on facets. These classes are rigid enough to
allow an efficient search, and contain many curves with small dilation. In
dimension $3$ we have found facet-gated bifractals of
genus $2\times2\times 2$ with dilation less than $17$. We give a name ``Spring''
for this family of curves. In dimension $4$ we obtained that
there is a curve with dilation less than $62$. As we are aware, these 
new curves are the best known (in terms of the dilation) in both dimensions $3$
and $4$.

Some lower bounds on the dilation for wider classes of cubically
decomposable curves are proved.

Let us outline the structure of the paper. In the next section we give
neccessary definitions. In Section~\ref{sec_results} we state the results of the
paper and compare them with known results, see Table~\ref{tbl_dil}. In
Section~\ref{sec_algo} we describe our algorithm, give a computer-assisted proof
of the minimality of the YE-curve and provide statistics of runs of our
algorithm. Section~\ref{sec_low} is devoted to the proofs of lower
bounds for the dilation. In the end of our paper we discuss possible directions of
further work.

\section{Definitions}

\paragraph{Dilation.}
Recall that a map $f$ between metric spaces $(T,\rho)$ and $(X,\sigma)$ is called
H\"older continuous with exponent $\alpha$ (or $\alpha$-H\"older, for
short) if for some $M$ the inequality holds
$$
\sigma(f(t_1),f(t_2))\le M\rho(t_1,t_2)^\alpha,\quad\forall t_1,t_2\in T.
$$
The smallest such $M$ is called H\"older coeffient, or H\"older seminorm (in case of
normed $X$), it is denoted as $|f|_{H^\alpha(T,X)}$.

Let us consider the $\ell_p^d$-norms in $\R^d$: 
$\|x\|_p=(|x_1|^p+\ldots+|x_d|^p)^{1/p}$, $1\le p<\infty$,
$\|x\|_\infty=\max|x_i|$. It is convenient to raise the corresponding
$(1/d)$-H\"older seminorm of $d$-dimensional space-filling curves to the $d$-th power.
\begin{definition}
    The $\ell_p$-dilation of a curve $\gamma\colon[a,b]\to\R^d$ 
    is the quantity
    \begin{equation}
        \label{WD}
        \WD_p(\gamma) := \sup_{a\le t_1<t_2\le b} \frac{\|\gamma(t_1)-\gamma(t_2)\|_p^d}{t_2-t_1}.
    \end{equation}
\end{definition}

It is obvious that
$$
\WD_p(\gamma) = |\gamma|^d_{H^{1/d}([a,b];\ell_p^d)}.
$$

Let us look at a slightly more geometrical approach to measure curve locality.
We say that a  curve $\gamma\colon[0,1]\to\R^d$ is
volume-parametrized, if for any $t_1<t_2$ we have
$\vol\gamma([t_1,t_2])=t_2-t_1$.
Define the $\ell_p$-dilation of a set $S\subset\R^d$ as
\begin{equation}
    \label{dilation}
\dil_p(S):=\frac{\diam_p(S)^d}{\vol(S)}.
\end{equation}
It is easy to see that for a curve with volume parametrization we
have
$$
\WD_p(\gamma) = \sup_{t_1<t_2} \dil_p(\gamma([t_1,t_2])).
$$
The last quantity depends only on the scanning order, not on the parametrization
of the curve.

The quantity $\WD_p$ was proposed as a measure of curve quality in the
paper~\cite{GL96}.  The notation $\WD_p$ is not well-established. Gotsman and
Lindenbaum in~\cite{GL96} intoduced notation $L_p(\gamma)$; Bader~\cite{B13}
writes $C_p$ for H\"older coefficient and $\widetilde{C}_p$ for its $d$-th power
(so, $\widetilde{C}_p=\WD_p$). Haverkort and Walderveen~\cite{HW10} suggested
the notation $\mathrm{WL}_p$ (Worst-case Locality) for~(\ref{WD});
Haverkort~\cite{H11} added the notation $\WD_p$ (Worst-case Diameter ratio) for
the equivalent quantity with diameter instead of distance.  Later,
in~\cite{H17}, he called $\mathrm{WL}_p$ the $L_p$-dilation.  We decided to
stick to the ``dilation'' term and the $\WD_p$ notation (Worst-case Dilation).

In this paper we consider only the Euclidean case, i.e. the Euclidean dilation
$\WD_2$, but our algorithms apply to any $p\in[1,\infty]$.

\paragraph{Peano multifractals.}

We say that curves $\gamma_1\colon I_1\to Q_1$ and $\gamma_2\colon I_2\to Q_2$
are \textit{isometric}, if there exist isometries $\alpha\colon I_1\to I_2$ and
$\beta\colon Q_1\to Q_2$, such that $\beta\circ\gamma_1 = \gamma_2\circ\alpha$.
Curves are \textit{similar} with a coefficient $s$, if $s\gamma_1(t/s^d)$ is
isometric to $\gamma_2(t)$.

Fix two parameters: a dimension $d\in\mathbb N$ and a scale $s\in\mathbb N$,
$s\ge 2$. The
unit cube $[0,1]^d$ is divided into the regular grid of $g:=s^d$ so-called
$s$-cubes
$$
\left[\frac{j_1}{s},\frac{j_1+1}{s}\right]\times\cdots
\times\left[\frac{j_d}{s},\frac{j_d+1}{s}\right].
$$
Moreover, we divide $[0,1]$ into $g$ equal segments $I_k^g:=[k/g,(k+1)/g]$. Given
a curve $\gamma$, each of the $g$ subcurves $\left.\gamma\right|_{I_k^g}$ is called
a \textit{fraction} of a curve.

\begin{definition}
    A tuple of surjective curves $\gamma_1,\ldots,\gamma_m\colon [0,1]\to[0,1]^d$
    forms a \textit{Peano multifractal} of genus $g=s^d$ if for all
    $r=1,2,\ldots,m$ each of the $g$ fractions $\left.\gamma_r\right|_{I_k^g}$
    maps $I_k^g$ to some $s$-cube $Q$ and is similar to one of the $\gamma_l$
    with the coefficient $s$.
\end{definition}
The parameter $m$ is called \textit{the multiplicity} of a multifractal.
For $m=1$ we get a Peano monofractal, for $m=2$ a bifractal, etc. A
related but broader class of polyfractal curves was introduced in~\cite{S15} for
dimension $3$.

Components of Peano multifractals are volume-parametrized curves.
They are $(1/d)$-H\"older continuous and the maximum
in~(\ref{WD}) is attained. If a multifractal $(\gamma_1,\ldots,\gamma_m)$ is
irreducible, i.e. does not contain a non-trivial subset $(\gamma_i)_{i\in I}$ that is a
multifractal itself, then $\WD_p(\gamma_1)=\ldots=\WD_p(\gamma_m)$. For the
generic case, one may formally define the dilation of a multifractal as
$\max_j\WD_p(\gamma_j)$.

Let us give some more definitions. A Peano monofractal of genus $s^d$ defines
the order (of traversal) of $s$-cubes. The sequence of $s$-cubes in that order
is called a \textit{prototype}. Peano multifractal
$(\gamma_1,\ldots,\gamma_m)$ has $m$ prototypes, accordingly. We remark that a
prototype is a special case of a polycubic chain, see Section~\ref{subsect_poly}.

The \textit{entrance} of a curve $\gamma\colon[0,1]\to\R^d$ is the point $\gamma(0)$,
and the \textit{exit} is $\gamma(1)$. They are also called entrance and
exit \textit{gates}. 
A \textit{pointed prototype} is a prototype with specified entrance/exit points for
each cube.

Let us give the definition, following~\cite{H17}.
\begin{definition}
A Peano multifractal $(\gamma_1,\ldots,\gamma_m)$ is called \textit{facet-gated} if all
entrances $\gamma_1(0),\ldots,\gamma_f(0)$ and exits $\gamma_1(1),\ldots,\gamma_m(1)$ lie
    on the cube facets (i.e., on $(d-1)$-dimensional faces) and not on faces of lower dimension.
\end{definition}

Each fraction $\left.\gamma_r\right|_{I_k^g}$ of a multifractal
$(\gamma_1,\ldots,\gamma_m)$ of genus $g$ is similar to some of the $\gamma_l$,
$l=l_{r,k}$.
That similarity is defined by a pair of isometry of the cube
$[0,1]^d$ (space orientation) and isometry of $[0,1]$ (time orientation). We
will refer to this pair as a \textit{base orientation}. So, a
multifractal is completely defined by
$m$ prototypes, $mg$ base orientations and $mg$ numbers $l_{r,k}$.
Note that base orientations form a group isomorphic to
$\mathbb Z_2^d\times S_d\times \mathbb Z_2$. We remark that base orientations
are not completely defined by a multifractal, if it has symmetries.

Each curve in the multifractal of genus $g$ is divided into $g$ fractions; they
are called fractions of order $1$. If we divide each of the fractions into $g$
sub-fractions, we will obtain $g^2$ fractions of order $2$, etc.

Often we identify fractions (curves) with their images (cubes).

A \textit{junction} $F\to G$ of a curve is a pair of adjacent fractions of some order
$k$. We call $F\to G$ a \textit{facet junction} if $F$ and $G$ intersect by a
common facet . The derived junction $F'\to G'$ is
defined as the junction of $(k + 1)$-th order formed by the last
fraction of the subdivision of $F$ and the first fraction of the subdivision of
$G$.  The \textit{depth} of a curve
(see~\cite{S04}) is the maximal order $k$ that generates new junctions.

An important observation made by E.~Shchepin~\cite{S15} is that the maximum in~(\ref{WD})
is attained at a pair of points from some non-adjacent sub-fractions $F_1\subset
F$, $G_1\subset G$ of some junction $F\to G$ (i.e., a pair from $F\times
G\setminus F'\times G'$, where $F'\to G'$ is the derived junction), or from some non-adjacent
first-order fractions $F_1$, $G_1$.

\paragraph{Cubically decomposable curves.}

We will require one more definition, from~\cite{KS18}.
\begin{definition}
    Consider a surjective curve $\gamma\colon[0,1]\to[0,1]^d$ and a number
    $g=s^d$. We say that $\gamma$ is decomposed into $g$ fractions if 
    each of the $g$ fractions $\left.\gamma_m\right|_{I_k^g}$
    maps $I_k^g$ onto some $s$-cube $Q$.
    The curve is called \textit{cubically decomposable} if it can be decomposed
    into an arbitrarily large number of fractions.
\end{definition}

Cubically decomposable curves are volume-parametrized.  Obviously, Peano
multifractal curves are cubically decomposable.

It is proven in~\cite{KS18} that cubically decomposable curves
have a recursive structure: there is some minimal number $s_1$ such that curve is
decomposable into $s_1^d$ first-order fractions; then there is some $s_2$ such
that each of the first-order fractions is decomposed into $s_2^d$ second-order
fractions, etc. The definition of a junction is the same as for multifractals: it
is a pair of order-$k$ adjacent fractions, for some $k$.

\section{Results}
\label{sec_results}

\subsection{Results on the $\ell_2$-dilation of monofractals.}

One of the main results of this paper is a Peano monofractal with
$\WD_2<5\frac23$.

\begin{theorem}
    \label{th_YE}
    There exists a unique (up to isometry) plane Peano monofractal curve
    $\gamma\colon[0,1]\to[0,1]^2$ of genus $5\times5$ such that
    $\WD_2(\gamma)=5\frac{43}{73}$. Moreover, it minimizes $\WD_2$ over
    monofractals of genus less or equal $6\times 6$.
\end{theorem}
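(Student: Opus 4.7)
The plan is to combine the junction-based characterization of $\WD_2$ with a SAT-driven enumeration of candidate monofractals of genus $s\times s$ for $s\in\{2,3,4,5,6\}$. The cases $s=2$ (Hilbert, $\WD_2=6$) and $s=3$ (Meurthe, $\WD_2=5\tfrac{2}{3}$) are already settled in the cited literature, and both values exceed $5\tfrac{43}{73}$; the substantive work is therefore at $s=4,5,6$. The key theoretical input is Shchepin's observation recorded above: the supremum in~(\ref{WD}) is attained on a pair of points drawn from non-adjacent sub-fractions of some junction $F\to G$, or from two first-order fractions. Consequently, for any target $D$, the assertion $\WD_2(\gamma)\le D$ reduces to a finite family of quadratic inequalities $\|x-y\|_2^2 \le D(t_y-t_x)$, one per admissible pair of sub-fractions up to the depth of $\gamma$.

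For the upper bound I would exhibit the YE curve explicitly by giving its $5\times 5$ prototype and the $25$ base orientations. Then I would identify the extremal pair of sub-fractions, evaluate the ratio $\|x-y\|_2^2/(t_y-t_x)$ in exact rational arithmetic to obtain $5\tfrac{43}{73}$, and verify that all remaining junction pairs (finitely many, by the depth of YE) satisfy the strict inequality.

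For the lower bound and the uniqueness statement, I would encode, for each $s\in\{4,5,6\}$, the existence of a monofractal of genus $s\times s$ compatible with all junction inequalities at $D$ slightly below $5\tfrac{43}{73}$ as a CNF formula. Boolean variables describe the prototype together with the $s^2$ base orientations drawn from $\mathbb Z_2^2\times S_2\times \mathbb Z_2$, and clauses enforce Shchepin's finite family of inequalities up to a chosen depth. The SAT solver should return UNSAT in every case. A second run at $s=5$ with equality $D=5\tfrac{43}{73}$, enumerating all models modulo the symmetry group, should yield a single orbit, the YE curve, thereby giving uniqueness.

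The main obstacle is almost entirely computational and twofold. First, the search space at $s=6$ is huge, with exponentially many prototype--orientation tuples; the SAT encoding has to incorporate aggressive symmetry-breaking clauses (for instance, fixing the orientation of the first cube) and tight propagation to terminate in reasonable time. Second, the junction inequalities must be handled in exact rational arithmetic, so that the certified value $5\tfrac{43}{73}$ is not an artifact of floating-point rounding. A minor but necessary check is that the chosen depth for junction enumeration is sufficient: by the recursive structure of Peano monofractals, once an order $k$ produces no genuinely new junction geometry the depth-$k$ bound equals the true dilation, and this must be explicitly verified for every surviving candidate.
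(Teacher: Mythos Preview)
Your overall architecture---enumerate candidates, encode junction constraints as CNF, hand to a SAT solver, and certify the winner in exact arithmetic---is indeed the paper's. But two load-bearing ingredients are absent from your plan, and without them the argument does not close. First, the paper does \emph{not} attempt a raw search over all $5\times5$ and $6\times6$ monofractals. It first proves Theorem~\ref{th_diag}: any plane monofractal with a diagonal junction already has $\WD_2\ge 5\tfrac23>5\tfrac{43}{73}$. This lets one discard every prototype containing a diagonal step and feed only facet-continuous prototypes to the solver. The paper's own complexity estimate (Appendix) says the unrestricted $6\times6$ search is beyond reach; ``aggressive symmetry-breaking clauses'' are not a substitute for a theorem that removes an exponential chunk of the search space. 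Relatedly, the paper does not encode the prototype in the CNF at all: it brute-forces over \emph{pointed prototypes} and uses SAT only for the base orientations within each one, which is why the diagonal-junction pruning is decisive.

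Second, your finiteness claim for the exact value is not right. Stabilization of junction \emph{types} at the curve's depth does not imply that the extremal pair in~\eqref{WD} sits at depth-$k$ data; the supremum may be realized at interior points requiring arbitrarily fine subdivision to locate. The paper closes this with Theorem~A (from~\cite{SB08}): if $\WD_\infty(\gamma)<\WD_2(\gamma)$, then the $\ell_2$-dilation is attained at a pair of \emph{vertices} of order-$k$ fractions, with $k$ bounded explicitly by~\eqref{depth}. For the YE curve one checks $\omega_\infty=5\tfrac13<5\tfrac{43}{73}=\omega_2$ and obtains $k\le5$, after which the value $5\tfrac{43}{73}$ is a finite rational computation. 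Your proposal needs an analogue of this step; ``verify that all remaining junction pairs satisfy the strict inequality'' is not a finite check as stated.
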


\begin{figure}[!h]
    \begin{center}
        \includegraphics[width=12cm]{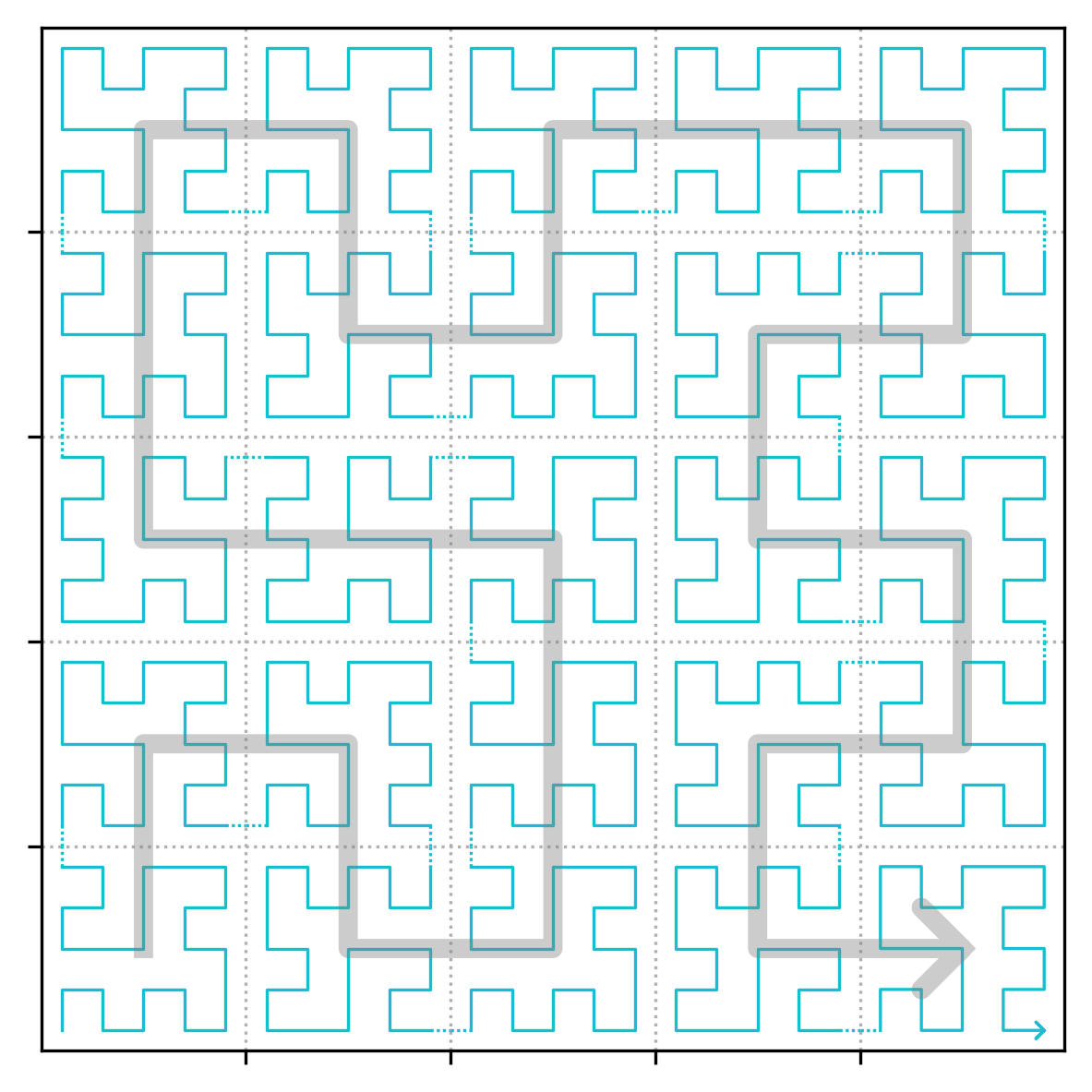}
    \end{center}
    \caption{YE minimal curve}
    \label{fig_YE}
\end{figure}

The fractal curve that we have found is shown in the Figure~\ref{fig_YE}.
More precisely, the figure shows the central broken lines of its prototype
and prototypes of its fractions; that uniquely defines it. The prototype of
this curve resembles stucked together letters Y and E, so we suggest the name
YE-shaped for curves of this prototype. It is proved that the dilation of
the curve shown in the figure is minimal among monofractals of genus $\le36$.
So we will call it minimal YE-shaped curve, or just YE-curve.

\subsection{Computations for facet-gated multifractals.}

Recall that the best known plane Peano multifractal, $\beta\Omega$ curve,
has entrances and exits on the sides of the square $[0,1]^2$ and not in
vertices. So, facet-gated curves are candidates for small $\ell_2$-dilation.

Facet-gated curves are facet-continuous, i.e., adjacent fractions
intersect by facets. But the gate condition is, of course, more
\textit{rigid} (restrictive) and allows us to make an effective search among
such curves.

We suppose that facet-gated curves minimize $\ell_2$-dilation in many
cases. This is true for the class of 2D bifractals of genus $2\times 2$ and for 3D
monofractals of genus $2\times 2\times 2$. Let us consider 3D bifractals.

\begin{definition}
We call any $3D$ facet-gated Peano bifractal of genus $2\times2\times2$ having
$\WD_2<17$ a \textit{Spring} curve.
\end{definition}

The following computation shows that Spring curves exist.

\begin{computation}
    \label{comp_spring}
    The minimal $\ell_2$-dilation of $3D$ facet-gated Peano bifractals of genus
    $2\times 2\times 2$ satisfies $16.9912 < \WD_2 < 16.9913$.
\end{computation}

Our hypothesis is that Spring curves are minimal among all
bifractals of genus $2^3$. Let us describe some properties of that curves.

Our calculations show that all Spring curves have dilation less than
$16.9913$; we suppose that their dilations are equal to
$1533\sqrt{6}/221$.
Moreover, all non-Spring facet-gated bifractals have $\WD_2 > 17.04$.
There are $9$ pointed prototypes containing Spring curves.

The entance/exit points and prototypes of Spring
curves are uniquely determined (up to isometry, of course). Spring bifractal
is a pair $(\gamma_1,\gamma_2)$; for short, we denote the first curve as (a) and
the second one as (b).
Entrance of (a) is $(1/3,1/3,0)$ and exit is $(2/3,0,1/3)$;
entrance of (b) is $(1/3,1/3,0)$ and exit: $(1,1/3,1/3)$.

To describe prototypes for (a) and (b), we use \textit{chain codes}
(see~\cite{KS18}). Fix a standard basis $\{\texttt{i,j,k}\}$ in $\R^3$ and let
$\texttt{I}=-\texttt{i}$, $\texttt{J}=-\texttt{j}$, $\texttt{K}=-\texttt{k}$.
\begin{figure}[h]
    \begin{center}
        \includegraphics[width=2cm]{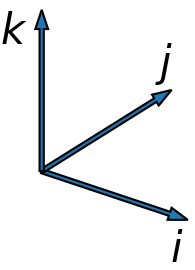}
    \end{center}
    \caption{Standard basis $\texttt{i,j,k}$.}
\end{figure}

Then, we can define a sequence of cubes $Q_1,\ldots,Q_{g}$ by sequence of vectors
$\delta_l$ such that $Q_{l+1}=Q_l+\delta_l$. For facet-continuous curves that
sequence may be represented as a word in the alphabet
$\{\texttt{i,j,k,I,J,K}\}$, called chain code.

In our case, the chain code for prototype (a) is \texttt{jikIJiK}
and for prototype (b) is \texttt{jkJijKJ}.

To give an example of a Spring curve, we have to specify base orientations and (a)/(b) letters
for each of the fractions.
A space orientation (an isometry of $[0,1]^3$) is completely defined by
its linear part, which is determined by images of basis vectors. Say,
$\texttt{KiJ}$ is the isometry with linear part that maps $\texttt{i}\to
\texttt{K}$, $\texttt{j}\to\texttt{i}$, $\texttt{k}\to\texttt{J}$. A time
orientation is either identity or time-reversal; we denote by $(\overline{a})$
time-reversed curve $(a)$ and similarly for $(b)$.

In our example we have the following fractions (see~Fig.~\ref{fig_spring}):

(a): \texttt{KIJ}($\overline{b}$),
\texttt{KIj}(a), \texttt{kji}(b), \texttt{Jki}($\overline{a}$),
\texttt{JkI}(b), \texttt{kIJ}(a), \texttt{KJi}(b), \texttt{JiK}(b).

(b): \texttt{KIJ}($\overline
b$), \texttt{ijK}($\overline{a}$), \texttt{Ikj}($\overline{a}$),
\texttt{KJI}($\overline{a}$), \texttt{kiJ}($\overline{a}$),
\texttt{Ijk}($\overline{a}$), \texttt{IjK}(a), \texttt{iKJ}(b).

\begin{figure}[h]
    \begin{center}
        \includegraphics[width=12cm]{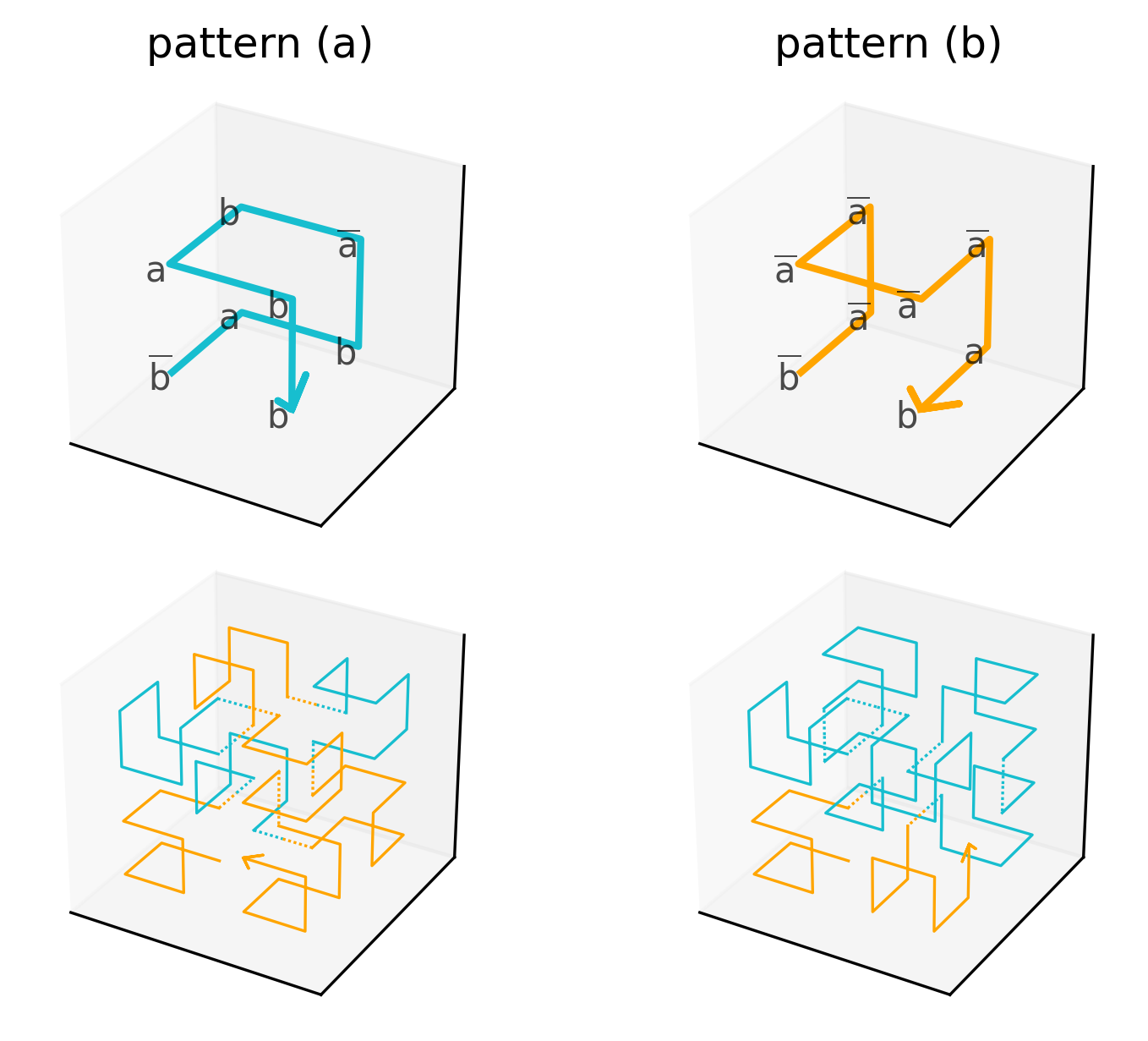}
    \end{center}
    \caption{Example of a Spring curve.}\label{fig_spring}
\end{figure}

\vspace{5pt}
We also have a result in the $4D$ case.

\begin{computation}
    The minimal $\ell_2$-dilation of $4D$ facet-gated Peano monofractals of genus
    $2^4$ safisties $61.935 < \WD_2 < 61.936$.
\end{computation}

Let us give an example of such curve. We use basis $\{\texttt{i,j,k,l}\}$.

Prototype: \texttt{kljKLkiKlkJLKlI}.
Base orientations:
\texttt{iKLJ}(a),
\texttt{kLij}(a),
\texttt{lJki}(a),
\texttt{jklI}(a),
\texttt{lKjI}($\overline{a}$),
\texttt{LKIj}(a),
\texttt{IkLj}($\overline{a}$),
\texttt{ikLj}(a),
\texttt{LKij}($\overline{a}$),
\texttt{lKji}(a),
\texttt{kjlI}(a),
\texttt{lJkI}($\overline{a}$),
\texttt{LkIJ}(a),
\texttt{LKji}($\overline{a}$),
\texttt{iljk}($\overline{a}$),
\texttt{jIlK}($\overline{a}$).
Entrance is $(0,1/3,1/3,1/3)$ and exit is $(1/3,0,1/3,2/3)$.
The dilation is at least $61\frac{29}{31}$ and we suppose that this is sharp.

\subsection{Lower bounds on the dilation.}

The proof of the YE-curve optimality relies on the following result.
\begin{theorem}
    \label{th_diag}
    If a plane Peano monofractal curve $\gamma$ has a diagonal junction, then
    $\WD_2(\gamma)\ge5\frac23$.
\end{theorem}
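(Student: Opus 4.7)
My plan is to exploit monofractal self-similarity together with Shchepin's observation that the supremum in~(\ref{WD}), when restricted to a junction $F\to G$, is attained at a pair $(\gamma(t_1),\gamma(t_2))\in F\times G\setminus F'\times G'$, where $F'\to G'$ is the derived junction. After applying an isometry I assume the given diagonal junction is between first-order $s$-cubes $F$ and $G$ sharing a vertex $v$. The monofractal property forces $F'$ to be the sub-cube of $F$ at corner $v$ and $G'$ the sub-cube of $G$ at $v$, so the derived junction is again diagonal of the same geometric type, rescaled by $1/s$. Iterating, diagonal junctions persist at every depth with identical local geometry.

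\textbf{Core estimate.} Using the observation above, I would select explicit test pairs $(p,q)$ with $p\in F$, $q\in G$, $(p,q)\notin F'\times G'$, chosen to maximize $\|p-q\|^2/(t_q-t_p)$. Natural candidates place $p$ in the penultimate sub-fraction of $F$, or at the far corner of $F'$ from $v$, and $q$ in $G'$, or in the second sub-fraction of $G$; in these configurations the time gap is of order $s^{-4}$ while the distance stays comparable to the diameter of the junction neighbourhood. A single-level estimate is typically insufficient, so I would combine it recursively with the analogous estimate at the derived junction, exploiting identical geometry at every scale. The resulting fixed-point inequality should pin the limiting lower bound at $5\tfrac23$, matching the dilation of the Meurthe $3\times3$ diagonal monofractal, which thus witnesses that the bound is sharp.

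\textbf{Main obstacle.} The principal difficulty is the combinatorial case analysis. A diagonal junction is compatible with several monofractal templates, depending on the placement of the overall entrance/exit corners and on the base orientations of the sub-fractions flanking $F'$ and $G'$; each choice affects which pair $(p,q)$ optimizes the distance-to-time ratio, and also determines how large the penultimate and successor sub-fractions contribute at each scale. Fortunately, monofractal rigidity---every fraction is a similar copy of the same template---forces the same local configuration at every scale, so the analysis collapses to finitely many top-level templates, which can be further reduced using the symmetry group of the square. In each remaining case one checks either a direct one-level bound of $17/3$ or a recursive inequality with the same limit.
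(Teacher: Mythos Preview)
Your proposal is a strategy sketch rather than a proof, and one of its load-bearing claims is false. You assert that monofractal rigidity ``forces the same local configuration at every scale,'' but this is not so: while the derived junction $F'\to G'$ of a diagonal junction $F\to G$ is indeed again diagonal (both sub-cubes sit at the shared vertex $v$), the \emph{base orientations} carried by $F'$ and $G'$ are dictated by the last and first fractions of the template and need not match those of $F$ and $G$. Hence the sequence of iterated derived junctions is only eventually periodic among the finitely many diagonal junction types; its period may exceed one, and each junction in the cycle can call for a different optimal test pair $(p,q)$. A genuine fixed-point argument would therefore have to be a coupled system of inequalities indexed by all diagonal junction types of $\gamma$, and you give no indication of how to close such a system at the value $17/3$. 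Your remark that Meurthe witnesses sharpness is also unverified: Meurthe is the minimal \emph{diagonal} $3\times 3$ monofractal, which is a statement about its global gates, not about the presence of a diagonal \emph{junction}.

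The paper's proof proceeds quite differently and avoids any recursion. It subdivides $F$ and $G$ once, tracking four consecutive sub-fractions $F_4,F_3,F_2,F_1$ before $v$ and $G_1,G_2,G_3,G_4$ after it, and then branches on Bauman's classification of plane monofractals into \emph{diagonal}, \emph{median}, and \emph{side} types (by the positions of the global entrance and exit). For diagonal and median curves a crude diameter-versus-time count over six to eight sub-fractions already exceeds $17/3$. The side case is handled by two explicit lemmas: Lemma~\ref{lem_AB} yields the bound whenever suitable opposite corners $A\in F_i$, $B\in G_j$ with $i+j=6$ satisfy $|A-B|\ge\sqrt{32}$ together with mild entrance/exit distance constraints; Lemma~\ref{lem_accelerate} shows that a diagonal junction which keeps direction forces a centrally symmetric configuration with specific vertex-visit timings, after which Lemma~\ref{lem_AB} applies again. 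The entire argument is a finite case analysis at depth one, with no derived-junction iteration.
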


We also prove some lower bounds on the dilation for cubically decomposable
curves.

\begin{theorem}
    \label{th_poly2}
    There exists some $\eps>0$, such that for any plane cubically decomposable
    curve $\gamma\colon[0,1]\to[0,1]^2$ one has $\WD_2(\gamma)\ge 4+\eps$.
\end{theorem}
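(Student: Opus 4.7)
}

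The plan is first to derive the baseline inequality $\WD_2(\gamma)\ge 4$ from the recursive structure of cubically decomposable curves, and then to upgrade it to $\ge 4+\eps$ by ruling out the equality case via a structural/compactness argument. Throughout, I rely on the fact that $\gamma$ is volume-parametrized, so that $\WD_2(\gamma)=\sup_{t_1<t_2}\dil_2(\gamma([t_1,t_2]))$, and on the result of~\cite{KS18} which gives a nested sequence of decomposition scales $s_1,s_2,\ldots$: a $k$-th order fraction $F$ maps a time-interval of length $\tau_k=(s_1\cdots s_k)^{-2}$ onto a square of side $\sigma_k=(s_1\cdots s_k)^{-1}$, and is itself cubically decomposable.

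First I would establish $\WD_2(\gamma)\ge 4$ by a combinatorial-geometric lemma on polycubic chains. Given any chain $F^{(1)},\ldots,F^{(N)}$ of $N$ consecutive $k$-fractions forming some connected polycube $P$, the dilation of $\gamma|_P$ equals $\dil_2(P)=\diam(P)^2/(N\sigma_k^2)$, times a scaling factor that cancels. So it suffices to exhibit, in the sequence of $k$-fractions of $\gamma$, a sub-chain $P$ with $\diam(P)^2/\vol(P)\ge 4$. One can do this by considering the first moment at which the running bounding box of the prefix chain exceeds a $2{:}1$ aspect ratio (or something similar); a short calculation shows that at this moment the diameter of the swept polycube is at least $2\sqrt{\vol}$, giving the bound.

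The main step is to rule out $\WD_2(\gamma)=4$. I would argue by contradiction, assuming a sequence $\gamma_n$ with $\WD_2(\gamma_n)\downarrow 4$. Near-equality in the lemma above forces each extremal sub-chain to be almost ``circular'' in the sense that its polycube $P$ must be nearly extremal for $\dil_2(P)$; but every polycube made of unit squares has $\dil_2(P)>4$ strictly, and moreover there is a combinatorial lower bound $4+\eta(N)$ depending only on the number of squares $N$. To exploit this uniformly, I would use self-similarity: if $\gamma$ has $\WD_2<4+\eps$ then the same must hold for each of its fractions and sub-fractions, so the extremal chains appear at every scale. Passing to a subsequence and using the finite alphabet of base orientations, extract a limit self-similar pattern whose chain polycubes have $\dil_2\to 4$; combined with the strict lower bound on $\dil_2$ of any integer polycube this yields a contradiction.

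The hard part will be making the last compactness/self-similarity reduction rigorous, because the decomposition scales $s_k$ are a priori unbounded and the class of cubically decomposable curves is not compact in any naive topology. The remedy is to do the analysis at the level of \emph{junctions} rather than curves: by the Shchepin observation (quoted in the Definitions section) the supremum defining $\WD_2$ is realized on a pair of non-adjacent sub-fractions of some junction, so the relevant data at each level is a finite combinatorial object (a pointed polycubic chain of bounded size), and a pigeonhole argument on this data produces the required uniform~$\eps$.
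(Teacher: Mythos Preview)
Your plan has a genuine gap at the crucial step, the passage from $\ge 4$ to $>4$.

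First, a concrete counterexample to the claim that ``every polycube made of unit squares has $\dil_2(P)>4$ strictly''. Take the facet-continuous $5$-chain with squares at $[0,1]^2$, $[1,2]\times[0,1]$, $[2,3]\times[0,1]$, $[2,3]\times[1,2]$, $[3,4]\times[1,2]$. Its body has diameter $|(4,2)|=\sqrt{20}$ and area $5$, so $\dil_2(P)=20/5=4$ exactly. Thus there is no positive gap $\eta(N)$ of the kind you postulate, and the polycube-dilation argument by itself cannot go past $4$. (Your bounding-box sketch for the baseline $\ge 4$ is also shaky: at the first time the box has aspect ratio $\ge 2$, one only gets $\diam^2/\vol\ge a^2/(ab)=a/b\ge 2$, not $4$.)

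The paper proceeds quite differently and avoids this obstacle by bringing in \emph{curve-level} information, not just chain combinatorics. It splits into two cases. If $\gamma$ has a diagonal junction $F\to G$, one looks at two sub-fractions on each side and gets $\diam\ge\sqrt{18}$ over $4$ fractions, hence $\WD_2\ge 4.5$ directly. If all junctions are facet, Lemma~\ref{lem_square} produces a collinear segment $S_{i-1},S_i,S_{i+1}$ of three fractions; then the middle fraction $S_i$, as a curve, has its entrance and exit on opposite sides of a square, and one invokes Theorem~B (from \cite{MS21}) which asserts $\WD_2>4$ for any such curve. The uniform $\eps$ then comes from a short compactness argument. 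The point is that the strict inequality in the facet case is carried by Theorem~B, a nontrivial external result about curves with opposite-side gates; your purely combinatorial chain approach has no substitute for it.
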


Note that the bound $\WD_2\ge4$ (and even stronger bound $\WD_\infty\ge 4$) was
proven in~\cite[Th.4]{HW10}.

\begin{theorem}
    \label{th_poly3}
    Let $\gamma\colon[0,1]\to[0,1]^3$ be a cubically decomposable curve. Then
    $\WD_2(\gamma)\ge 7\sqrt{14}/2>13.09$. If $\gamma$ has a non-facet junction, then
    $\WD_2(\gamma)\ge 17.17$.
\end{theorem}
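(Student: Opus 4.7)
The plan is to apply the Shchepin junction principle recalled in Section~2: for any cubically decomposable curve $\gamma$ the supremum defining $\WD_2(\gamma)$ is attained at a pair $(x,y)$ lying either in two non-adjacent first-order fractions, or in $F\times G\setminus F'\times G'$ for some junction $F\to G$ with derived junction $F'\to G'$. It is therefore enough to exhibit, at some junction of $\gamma$, a pair $(x,y)$ whose ratio $\|x-y\|^3/(t_y-t_x)$ already reaches the announced bound.

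The first step is to classify the junctions $F\to G$ by the dimension of $F\cap G$: \emph{facet} (2-dimensional), \emph{edge} (1-dimensional), or \emph{vertex} (0-dimensional); the latter two form the non-facet class. By rescaling, I take $F,G$ to be unit cubes in the prescribed arrangement, so the time span from the start of $F$ to the end of $G$ equals $2$, while the derived sub-fractions $F'\subset F,\,G'\subset G$ are cubes of some side $1/s$ with $s\ge2$, forced to touch $F\cap G$; this leaves a finite list of combinatorial placements. For each junction type and each admissible placement of $(F',G')$ the plan is to compute
\[
\max_{(x,y)\in F\times G\setminus F'\times G'} \frac{\|x-y\|^3}{t_y-t_x},
\]
with $t_x,t_y$ dictated by which sub-fraction of $F,G$ contains $x,y$, and then to take the minimum over placements. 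Since enlarging $s$ only shrinks $F',G'$ and thereby weakens the exclusion, the worst case is $s=2$.

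In the non-facet situations the far corners of $F$ and $G$ already lie at distance $3$ (edge) or $2\sqrt3$ (vertex) and outside every admissible $F'\times G'$: the vertex case then immediately yields $\ge12\sqrt3>20$, while the edge case, where the bare corner-to-corner ratio only gives $27/2=13.5$, needs a refinement that combines the corner pair with the constraint on the time slot assigned to the sub-cube containing each corner, pushing the bound to $\ge17.17$. The main obstacle is the facet-junction optimisation that produces the sharp constant $7\sqrt{14}/2$: here the corner-to-corner pair alone gives only $3\sqrt6\approx 7.35$, so one must exploit the full exclusion of $F'\times G'$ together with the spatial tension created by the time parametrisation --- short $t_y-t_x$ forces $x,y$ close to the junction gate --- and the plan is to enumerate the few placements of $F'$ on the shared facet (after quotienting by the square's symmetries), solve a low-dimensional continuous optimisation in $(x,y)$ for each, and identify the minimising configuration that realises $7\sqrt{14}/2$.
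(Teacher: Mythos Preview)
Your plan has a genuine gap in both parts, rooted in a misreading of what a single junction $F\to G$ together with its derived junction $F'\to G'$ can deliver.

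\textbf{Facet case.} You propose to extract the constant $7\sqrt{14}/2$ from one facet junction $F\to G$ by optimising over $(x,y)\in F\times G\setminus F'\times G'$. But the only time information this data gives you is $t_x\in[0,1]$, $t_y\in[1,2]$ (and, if $x\notin F'$ or $y\notin G'$, a \emph{lower} bound $t_y-t_x\ge 1/s^3$, which goes the wrong way). The denominator can always be as large as~$2$, so the best ratio you can certify from a single facet junction is
\[
\frac{\diam(F\cup G)^3}{2}=\frac{6^{3/2}}{2}=3\sqrt 6\approx 7.35,
\]
well below $7\sqrt{14}/2\approx 13.1$. Knowing the \emph{placement} of $F',G'$ does not help: the far corners of $F\cup G$ are never in $F'$ or $G'$, so the exclusion is vacuous at those points, and ``which sub-fraction of $F$ contains $x$'' is unknown unless you also fix the full prototype of $F$---which you do not do (you enumerate only placements of $F'$). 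The Shchepin principle you invoke is a tool for \emph{locating} the maximising pair when one wants to \emph{compute} $\WD$; it does not by itself produce lower bounds.

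The paper's proof is structurally different: it works not with one junction but with the whole polycubic chain of fractions of $\gamma$ (which has cubic body), shows that any such chain with more than $8$ fractions contains a $4$- or $5$-segment with empty intersection (Lemma~\ref{lem_5empty}), and then bounds the dilation of that segment by $\diam^3/\#\text{cubes}$: a facet-continuous $4$-chain with empty intersection has diameter vector at least $(3,2,1)$, giving $14^{3/2}/4=7\sqrt{14}/2$ (Lemma~\ref{lem_4empty}, Statement~\ref{stm_hyper3d}). The ``$4$ consecutive fractions'' is what supplies the missing time control.

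\textbf{Edge-junction case.} The same problem recurs. With only $F',G'$ you cannot beat $\diam(F\cup G)^3/2=27/2=13.5$; the ``time slot of the sub-cube containing each corner'' is unknown from your data. The paper instead passes to \emph{three} sub-fractions on each side, $F_3,F_2,F_1,G_1,G_2,G_3$, runs a short case analysis on the positions of $G_2,G_3$ (and symmetrically $F_2,F_3$), and in the one residual configuration uses a telescoping inequality along a path of four explicit vertices $A_0,A_1,A_2,A_3$:
\[
\sum_{i=0}^{2}|A_{i+1}-A_i|^3\le \WD(\gamma)\,(t_3-t_0)\le 4\,\WD(\gamma),
\]
which yields $\WD(\gamma)\ge \tfrac14(27+6\sqrt6+27)>17.17$. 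Neither the three-on-each-side refinement nor this summation device appears in your plan.
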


From this Theorem and Computation~\ref{comp_spring} we obtain the following.

\begin{corollary}
    $3D$ Peano bifractals of genus $2\times 2\times 2$ with minimal
    $\ell_2$-dilation have only facet junctions.
\end{corollary}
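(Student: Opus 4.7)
The plan is to derive the corollary directly from the two ingredients cited just before it: the upper bound of Computation~\ref{comp_spring} and the conditional lower bound of Theorem~\ref{th_poly3}. No new geometric argument is needed; the work is only to verify that the two statements apply to the class in question and that their numerical windows are disjoint.

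First, I would record that every Peano bifractal is cubically decomposable (this is stated in the Definitions section, since each curve of a multifractal is volume-parametrized and admits arbitrarily fine cubic decompositions by iterating the self-similar subdivision). Hence Theorem~\ref{th_poly3} is applicable to any Peano bifractal of genus $2\times 2\times 2$: if such a bifractal has a non-facet junction (at any order of subdivision), then $\WD_2\ge 17.17$.

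Second, Computation~\ref{comp_spring} produces a facet-gated Spring bifractal of the same genus with $\WD_2 < 16.9913$. In particular, the infimum of $\WD_2$ over all $3$D Peano bifractals of genus $2\times 2\times 2$ is strictly less than $17$, and certainly less than $17.17$. Since for a fixed multiplicity and genus there are only finitely many combinatorial types (prototypes, base orientations and label assignments), this infimum is attained, so a ``bifractal of minimal $\ell_2$-dilation'' exists and has dilation at most $16.9913$.

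Finally, suppose a minimal bifractal possessed a non-facet junction. Theorem~\ref{th_poly3} would then force its dilation to be at least $17.17$, contradicting the upper bound $16.9913$ just obtained. Hence every minimizer has only facet junctions, which is the claim. The only potential obstacle is making sure the two quoted statements really sandwich the class correctly~--- i.e.\ that Theorem~\ref{th_poly3}'s hypothesis ``non-facet junction'' includes junctions at every order of cubic decomposition (it does, by the definition of junction for cubically decomposable curves given earlier), and that the Spring curves of Computation~\ref{comp_spring} indeed qualify as genuine competitors (they are facet-gated bifractals of genus $2^3$, by definition). Both are immediate from the preceding definitions, so the corollary reduces to the inequality $16.9913 < 17.17$.
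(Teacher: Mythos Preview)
Your argument is correct and matches the paper's own reasoning, which simply invokes Computation~\ref{comp_spring} and Theorem~\ref{th_poly3} without further detail. Your additional remarks (that bifractals are cubically decomposable, that the minimum is attained because only finitely many combinatorial types occur, and that the Spring curves are legitimate competitors in the full class) make explicit exactly the verifications the paper leaves implicit.
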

We conjecture that minimal bifractals are facet-gated, but we do not have a
proof.

\begin{theorem}
    \label{th_poly4}
    Let $\gamma\colon[0,1]\to[0,1]^4$ be a cubically decomposable curve. Then
    $\WD_2(\gamma)\ge 42.25$. If $\gamma$ has a non-facet junction, then
    $\WD_2(\gamma)\ge 62.25$.
\end{theorem}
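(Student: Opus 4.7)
The argument follows the strategy of Theorem~\ref{th_poly3} lifted to dimension four. Its core tool is Shchepin's observation that the supremum in $\WD_2(\gamma)$ is realised at a pair of points $p\in F_1$, $q\in G_1$ with $F_1\subset F$, $G_1\subset G$ non-adjacent sub-fractions of some junction $F\to G$ in the recursive decomposition of $\gamma$ (or non-adjacent first-order fractions). Thus every junction in the decomposition furnishes a lower bound on $\WD_2(\gamma)$ equal to the minimum of $\|p-q\|_2^4/(t_q-t_p)$ over admissible non-adjacent sub-fraction configurations.

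The first step classifies 4D junctions by the dimension $k\in\{3,2,1,0\}$ of their shared face (facet, 2-face, edge, vertex). For two unit 4-cubes joined along a $k$-face, the extreme corners differ by $2$ in $4-k$ coordinates and at most $1$ in the remaining $k$, so $\diam_2^2=16-3k$ and the combined volume equals~$2$. Taking extreme corners $p\in F$, $q\in G$ (visited with $t_q-t_p\le\vol(F)+\vol(G)=2$) yields the junction-level bound $\WD_2(\gamma)\ge(16-3k)^2/2=24.5,\,50,\,84.5,\,128$ for $k=3,2,1,0$. Hence edge and vertex junctions already enforce $\WD_2\ge 84.5>62.25$. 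The second assertion therefore reduces to the 2-face case; and the first assertion reduces to handling a facet junction, because if $\gamma$ admits none then every junction is non-facet and the second assertion already gives $\WD_2\ge 62.25>42.25$.

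Both remaining cases are handled by descending to second-order sub-fractions. For a 2-face junction the combined region lies inside a $1\times1\times2\times2$ box, tiled by two $2^4$-arrays of sub-cubes of side $\tfrac12$; for a facet junction the combined region is the $2\times1\times1\times1$ box tiled by a $4\times2\times2\times2$ array. In each setting I would enumerate non-adjacent pairs $(F_1,G_1)$ with $F_1\subset F$, $G_1\subset G$, compute $\max\|p-q\|_2$ over $(p,q)\in F_1\times G_1$, and bound $t_q-t_p$ from above using (a) the exclusion of the derived junction pair $(F',G')$, (b) the trivial bound $t_q-t_p\le 2$, and (c) the contribution of sub-fractions that must be traversed between $F_1$ and $G_1$ in time. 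Minimising $\|p-q\|_2^4/(t_q-t_p)$ over admissible configurations yields $42.25$ in the facet case and $62.25$ in the 2-face case.

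The principal obstacle is the combinatorial-geometric optimisation underlying this last step. In 4D the number of admissible non-adjacent sub-fraction pairs is considerably larger than in 3D, and the spatial non-adjacency constraint must be combined carefully with the temporal constraint imposed by the derived junction. Where the second-order analysis does not already attain the sharp bounds, one iterates to deeper sub-fraction levels, mirroring the depth argument used in the proof of Theorem~\ref{th_poly3}.
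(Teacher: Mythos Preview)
Your classification by the dimension $k$ of the shared face and the resulting bounds for $k\le 1$ match the paper exactly. The gap is in the two remaining cases, where ``enumerate sub-fraction pairs and minimise'' is not a proof but a hope, and in the facet case the hope is misplaced.

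For the first assertion the paper does \emph{not} analyse a single facet junction at all. It applies Statement~\ref{stm_hyper4d} to the \emph{entire} first-order polycubic chain $Q_1,\dots,Q_n$ (whose body is the cube $[0,1]^4$) and shows that this chain must contain four consecutive fractions $Q_i,\dots,Q_{i+3}$ whose centres are not coplanar; such a $4$-segment has diameter at least $|(2,2,2,1)|=\sqrt{13}$ over volume $4$, giving $13^2/4=42.25$. Your plan restricts to pairs $F_1\subset F$, $G_1\subset G$ across one fixed junction $F\to G$. But the configurations that force a $\sqrt{13}$ diameter span four consecutive first-order fractions, not two; they may lie entirely inside $F$, or straddle three first-order cubes, and are therefore invisible to a cross-junction pair enumeration. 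Nothing in your scheme manufactures the non-coplanar $4$-chain, and the $2\times1\times1\times1$ box $F\cup G$ by itself carries no such obstruction.

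For the $2$-face case your outline is closer to the paper's, which also works with sub-fractions $F_3,F_2,F_1,G_1,G_2,G_3$ of a single junction. Most positions of $F_2,G_2,G_3$ do yield a pair with ratio $\ge 64$, but one residual configuration (in the paper's coordinates $G_2=G_1+(0,0,1,0)$, $F_2=F_1+(0,0,1,0)$, $G_3=G_1+(0,0,0,1)$) admits no single sub-fraction pair reaching $62.25$. There the paper picks four points $A_0\in F_1$, $A_1\in G_1$, $A_2\in G_2$, $A_3\in G_3$ and uses the telescoping inequality $\sum_i|A_{i+1}-A_i|^4\le\WD_2(\gamma)\sum_i(t_{i+1}-t_i)\le 4\,\WD_2(\gamma)$ to get $\WD_2\ge(100+49+100)/4=62.25$. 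A pair-by-pair minimisation does not encode this multi-point chain inequality, and passing to finer sub-fractions cannot replace it: the obstruction is not one of resolution but of combining several consecutive distance constraints along the curve simultaneously. (The same device is what resolves the ``difficult'' case in Theorem~\ref{th_poly3}; it is not a depth-iteration argument.)
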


\begin{corollary}
    $4D$ Peano monofractals of genus $2^4$ with minimal $\ell_2$-dilation 
    have only facet junctions.
\end{corollary}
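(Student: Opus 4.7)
The plan is to combine two ingredients already established earlier in the paper: the $4$D computation that exhibits a facet-gated monofractal of genus $2^4$ with small dilation, and the lower bound for non-facet junctions from Theorem~\ref{th_poly4}. The argument is essentially a one-line pigeonhole between two explicit numerical thresholds.

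First I would record the upper bound. The $4$D computation produces a facet-gated Peano monofractal $\gamma_\star$ of genus $2^4$ with $\WD_2(\gamma_\star)<61.936$. Since facet-gated monofractals of genus $2^4$ form a subclass of all Peano monofractals of genus $2^4$, the minimal value $m$ of $\WD_2$ over the latter (larger) class satisfies $m<61.936$. The minimum is attained because, up to isometry, the class of monofractals of genus $2^4$ is finite (it is parametrized by finitely many discrete data: a prototype and $16$ base orientations).

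Next I would invoke the lower bound. Every Peano multifractal is cubically decomposable (as noted right after the definition of cubically decomposable curves), so Theorem~\ref{th_poly4} applies to any $4$D monofractal of genus $2^4$: if such a curve has even a single non-facet junction, then $\WD_2\ge 62.25$. Comparing the two thresholds, $61.936<62.25$, so no minimizer of $\WD_2$ in this class can possess a non-facet junction, and the corollary follows.

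There is no real obstacle in this argument; the substance lies entirely in Theorem~\ref{th_poly4} and in the computation. The only point to verify carefully is that Theorem~\ref{th_poly4} governs \emph{every} junction at every decomposition scale, not just first-order ones. This is immediate, since its hypothesis is only that the curve be cubically decomposable, and the definition of a junction in that setting admits pairs of adjacent fractions of arbitrary order. Consequently the non-facet exclusion applies to junctions of all orders of a minimizer, which is exactly what the corollary asserts.
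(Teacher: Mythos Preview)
Your argument is correct and matches the paper's intended reasoning: the corollary is an immediate consequence of Computation~2 (giving a monofractal with $\WD_2<61.936$) and Theorem~\ref{th_poly4} (forcing $\WD_2\ge 62.25$ in the presence of any non-facet junction). The paper does not spell out a separate proof, and your added remark on finiteness to ensure the minimum is attained is a reasonable clarification.
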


Finally, we prove a bound on the dilation for arbitrary surjective curves in the
4D case.
\begin{theorem}
    \label{th_4dsurj}
    For any surjective curve $\gamma\colon[0,1]\to[0,1]^4$ we have
    $\WD_2(\gamma)\ge 30$.
\end{theorem}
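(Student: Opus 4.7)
The plan follows the strategy of Haverkort--Walderveen~\cite[Th.~4]{HW10}, who reduced the $2$D bound $\WD_2\ge4$ to a linear-programming argument on three corners of $[0,1]^2$. Since $\gamma$ is surjective, for each $p\in[0,1]^4$ there is some $\tau_p\in[0,1]$ with $\gamma(\tau_p)=p$, and the definition of dilation gives
$$
|\tau_p-\tau_q|\ \ge\ \|p-q\|_2^{\,4}\,/\,\WD_2(\gamma),\qquad p,q\in[0,1]^4.
$$
Fix a finite set $P=\{p_1,\ldots,p_N\}\subset[0,1]^4$, let $\sigma$ be the permutation sorting the visit times $\tau_{p_{\sigma(1)}}<\cdots<\tau_{p_{\sigma(N)}}$, and set $a_k=\tau_{p_{\sigma(k+1)}}-\tau_{p_{\sigma(k)}}$. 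Writing $W=\WD_2(\gamma)$ for brevity, the inequalities become the linear program
$$
\sum_{k=i}^{j-1} a_k\ \ge\ \|p_{\sigma(i)}-p_{\sigma(j)}\|_2^{\,4}/W\ \ \text{for all } i<j,\qquad \sum_k a_k\le 1,
$$
whose optimum $L(\sigma)$ yields $W\ge L(\sigma)$. The overall bound is $\min_\sigma L(\sigma)$, and the task is to select $P$ so that $\min_\sigma L(\sigma)\ge 30$.

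The natural candidate for $P$ is the set of $16$ vertices of $[0,1]^4$, possibly augmented by the centre or by face/edge midpoints. The dominant constraints come from the $8$ antipodal pairs (at $\ell_2$-distance $2$, contributing $d^4=16$); they provide the long ``span'' constraints. Secondary constraints are provided by the $32$ Hamming-$3$ pairs (with $d^4=9$) and the $48$ Hamming-$2$ pairs (with $d^4=4$). As a proof of concept, let $P$ consist of $8$ vertices forming four antipodal pairs $(v_i,v_i')$, $i=1,\dots,4$, all of whose cross-pair Hamming distances equal $2$. For the interleaved ordering $v_1,v_2,v_3,v_4,v_1',v_2',v_3',v_4'$ one obtains four span-$4$ antipodal constraints $a_i+\cdots+a_{i+3}\ge 16/W$ together with the consecutive-pair constraints $a_k\ge 4/W$; these already force $\sum_{k=1}^{7} a_k\ge 28/W$, hence $W\ge 28$. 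Enlarging $P$ (so as to incorporate the Hamming-$2$ and Hamming-$3$ pair constraints of the full $16$-vertex set, or to add the centre at a strategically chosen position) is expected to raise this to exactly $30$.

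The main obstacle is verifying $\min_\sigma L(\sigma)\ge 30$ across all orderings, which is combinatorially heavy. I would exploit the symmetry group of the cube together with the $\mathbb Z_2$-action of time reversal to reduce the orderings to a tractable number of equivalence classes. For each essentially distinct $\sigma$, it suffices to exhibit a feasible solution to the dual LP, i.e.\ non-negative weights $y_{ij}$ on pairs satisfying $\sum_{(i,j):\,\mathrm{span}\ni k}y_{ij}\le 1$ for every interval index $k$ and $\sum y_{ij}\|p_{\sigma(i)}-p_{\sigma(j)}\|_2^{\,4}\ge 30$. Based on the antipodal-pair heuristic, the binding ``adversarial'' ordering should be a maximally-interleaved arrangement of the antipodal pairs; the LP should be tight at value $30$ there, and the remaining verifications are either direct or, in the spirit of the LP/SAT methodology used elsewhere in this paper, computer-assisted.
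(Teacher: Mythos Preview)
Your LP framework is sound, and with the full $16$-vertex set it would indeed yield $\WD_2\ge 30$ --- but as written the proposal is not a proof. You establish only $W\ge 28$ for one particular ordering of a particular $8$-point configuration, and then assert that enlarging $P$ ``is expected to raise this to exactly $30$''. The step you correctly identify as the main obstacle --- verifying $\min_\sigma L(\sigma)\ge 30$ over all orderings --- is never carried out; you defer it to unspecified symmetry reductions or a computer search. That is the missing idea.

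The paper avoids the enumeration entirely with a short combinatorial argument that, in your language, produces a uniform dual certificate for every $\sigma$. Let $A_0,\ldots,A_{15}$ be the $16$ vertices in visit order. The $15$ consecutive gaps split into three blocks of five: $(t_5-t_0)+(t_{10}-t_5)+(t_{15}-t_{10})\le 1$, so some block of six consecutive vertices is traversed in time $\le 1/3$. The key lemma is purely combinatorial: \emph{any} $6$-element subset of $\{0,1\}^4$ contains either a pair of $4$-antipodes or two pairs of $3$-antipodes. A brief case analysis then shows that among those six vertices one can always find a time-monotone chain whose consecutive $\ell_2^4$-increments sum to at least $10$ (the worst case being $9+1$ from one $3$-antipode edge plus one Hamming-$\ge 1$ edge). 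Hence $10\le W\cdot\tfrac13$, i.e.\ $W\ge 30$.

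Translated into your LP: for each of the three disjoint gap-windows $\{a_1,\ldots,a_5\}$, $\{a_6,\ldots,a_{10}\}$, $\{a_{11},\ldots,a_{15}\}$ the lemma supplies a chain whose edges have disjoint spans inside that window and total $d^4$-weight $\ge 10$; putting unit dual weight on all these edges gives coverage $\le 1$ on every $a_k$ and objective $\ge 30$. So the ``adversarial ordering'' analysis you anticipate is unnecessary --- the same certificate works for every $\sigma$. The heart of the matter is the $6$-point lemma, which replaces your proposed case enumeration.
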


Table~\ref{tbl_dil} summarizes known results on the minimal $\ell_2$-dilation of
surjective curves~$\gamma\colon[0,1]\to[0,1]^d$. In the last column we normalize the
curve dilation by the dilation of the unit cube; this allows to compare
dilations across different dimensions and metrics.

\begin{table}
\begin{tabular}{|l|l|l|l|l|l|}
    \hline
    d & \textbf{genus} & \textbf{curve type} & \begin{tabular}{@{}l@{}}\textbf{minimal
    curve}\\\textbf{and references}\end{tabular} & $\WD_2$ &
    $\frac{\WD_2^{1/d}}{d^{1/2}}$ \\
    \hline
    \hline
    2 & $2\times 2$ & monofractal & Hilbert~\cite{B06} & 6 & 173\% \\
      & $3\times 3$ &  & Meurthe~\cite{SB08} & $5\frac23$ & 168\% \\
      & $\le 6\times 6$ &  & YE ($5\times 5$) & $5\frac{43}{73}$ & 167\% \\
    \hhline{|~|-----|}
      & any & monofractal &
        \begin{tabular}{@{}l@{}}\cite{S04,B14}\\best known: YE\end{tabular} &
        $\ge 5$ & $> 158\%$ \\
    \hhline{|~|-----|}
      & $2\times 2$ & bifractal & 
        \begin{tabular}{@{}l@{}}Wierum's $\beta\Omega$\\\cite{W02,HW10}\end{tabular} 
      & 5.000 & 158\% \\
    \hhline{|~|-----|}
      & \multicolumn{2}{l|}{any cubically decomposable} &
        \begin{tabular}{@{}l@{}}Theorem~\ref{th_poly2}\\best known: $\beta\Omega$\end{tabular} &  
        $>4$ & $>141\%$ \\
    \hhline{|~|-----|}
      & \multicolumn{2}{l|}{special triangle fractal} & Serpinski~\cite{NRS02} & 4 & 141\% \\ 
    \hhline{|~|-----|}
      & \multicolumn{2}{l|}{any surjective curve} &
        \begin{tabular}{@{}l@{}}\cite[Theorem 2]{MS21}\\best known: Serpinski\end{tabular} &
        $>3\frac58$ & $> 134\%$  \\
    \hline
    \hline
    3 & $2\times 2\times 2$ & monofractal & Haverkort's Beta~\cite{H17} & 18.566 & 153\% \\
    \hhline{|~|-----|}
      & $2\times 2\times 2$ &
        \begin{tabular}{@{}l@{}}facet-gated\\bifractal\end{tabular} &
            Spring (Computation 1) & 16.991 & 148\% \\
    \hhline{|~|-----|}
      & \multicolumn{2}{l|}{any cubically decomposable} &
        \begin{tabular}{@{}l@{}}Theorem~\ref{th_poly3}\\best known: Spring\end{tabular} &
            $> 13.095$ & $> 136\%$  \\
    \hhline{|~|-----|}
      & \multicolumn{2}{l|}{any surjective} &
        \begin{tabular}{@{}l@{}}\cite{NRS02}\\best known:  Spring\end{tabular} &
            $\ge 11.1$ & $> 128\%$ \\
    \hline
    \hline
    4 & $2^4$ &
        \begin{tabular}{@{}l@{}}facet-gated\\monofractal\end{tabular} &
            Computation 2 & 61.93 & 140\% \\
    \hhline{|~|-----|}
      & \multicolumn{2}{l|}{any cubically decomposable} &
        \begin{tabular}{@{}l@{}}Theorem~\ref{th_poly4}\\best known: Comp. 2\end{tabular} &
            $\ge 42.25$ & $> 127\%$  \\
    \hhline{|~|-----|}
      & \multicolumn{2}{l|}{any surjective} &
            \begin{tabular}{@{}l@{}}Theorem~\ref{th_4dsurj}\\best known: Comp.
            2\end{tabular} & $\ge 30$ & $> 117\%$ \\
    \hline
\end{tabular}

    \caption{Known results on $\ell_2$-dilation}\label{tbl_dil}
\end{table}

\section{Search algorithm and implementation}
\label{sec_algo}

\subsection{Algorithm}
The algorithms in this section work for arbitrary Peano multifractals, but for
simplicity we describe them for monofractals.

Recall that the prototype of a Peano monofractal of genus $s^d$ is the sequence
of $s$-cubes in the curve scanning order and the pointed prototype is th prototype
with specified entry/exit points for each cube.

To define a fractal curve with given pointed prototype, you must specify valid base
orientations for all fractions, i.e., such orientations that transform
entrance/exit points as required (note that time reversal swaps entrance and
exit). An important property of that structure is
that the choices for different fractions are independent.

Our computations are based on the algorithm for the estimation of the minimal dilation
within a class of curves with given pointed prototype.
 
Let us give an example to show the effeciency of such approach.
Consider curves of genus $5\times 5$ with entrance $(0,0)$ and exit $(1,1)$.
Fix some pointed prototype. It is clear that there are
four possible base orientations (two with time reversal and two without) in each
fraction.  This gives $2^{50}$ possible curves. Our algorithm allows us to
consider all that curves simultaneously.

\paragraph{Bisection of the curve dilation.}

It is advisable to consider the case of completely defined curve first.

We describe an algorithm that, given a monofractal curve $\gamma$ and thresholds
$\omega_-<\omega_+$,
either outputs pair of points on a curve providing $\WD_p(\gamma)\ge\omega_-$, or guarantees
that $\WD_p(\gamma)\le\omega_+$.

The dilation of the curve is equal to the maximum of the dilation for all
possible pairs of non-adjacent sub-fractions $F\subset F^\circ,G\subset G^\circ$
of curve junctions $F^\circ\to G^\circ$, or non-adjacent first-order fractions
$F,G$:
$$
\WD_p(F,G) := \max\left\{\frac{\|\gamma(t_1)-\gamma(t_2)\|_p^d}{|t_2-t_1|} \colon \gamma(t_1)\in
F,\;\gamma(t_2)\in G\right\}.
$$
There is a finite number of such pairs (becase the number of distinct junctions
is finite), and for each pair we can get upper $U$ and lower $L$ bounds for the
quantity $\WD_p(F,G)$:
$$
U = \frac{\max\{\|\gamma(t_1)-\gamma(t_2)\|_p\colon \gamma(t_1)\in F,\;\gamma(t_2)\in G\}}
{\min\{|t_2-t_1|\colon \gamma(t_1)\in F,\;\gamma(t_2)\in G\}},
$$
$$
L = \frac{\max\{\|\gamma(t_1)-\gamma(t_2)\|_p\colon \gamma(t_1)\in F,\;\gamma(t_2)\in G\}}
{\max\{|t_2-t_1|\colon \gamma(t_1)\in F,\;\gamma(t_2)\in G\}}.
$$

Three cases are possible: 
\begin{itemize}
    \item[(A)] If $L\ge\omega_-$, then $\WD_p(\gamma)\ge\omega_-$ and the work is done.  
    \item[(B)] If $U\le\omega_+$, then we can throw away (do not consider further) that pair.
    \item[(C)] If $L < \omega_- < \omega_+ < U$, then we subdivide both fractions and
        continue to evaluate pairs of sub-fractions.
\end{itemize}

Eventually, $U-L$ will be small enough to avoid the case (C). So, we either will
get (A), or we will throw away all pairs~--- it will mean that
$\WD_p(\gamma)\le\omega_+$.

\paragraph{Bisection of the minimal dilation for a pointed prototype.}

Now we describe an algorithm that, given a pointed prototype and thresholds
$\omega_-<\omega_+$, either guarantees that for all curves with that pointed prototype
$\WD_p(\gamma)\ge \omega_-$, or finds a curve with $\WD_p(\gamma)\le \omega_+$.

The difference from the previous algorithm is the following: in the case (C), when
subdividing fractions, we need to know the base orientations
on the corresponding fractions. Therefore, this step is
branching in accordance with all
possible base orientations. Next, at the case (A), when we arrive at a configuration with
$L\ge\omega_-$, we cannot state that $\WD_p(\gamma)\ge\omega_-$ for all curves in the class.
Instead, this only gives us a ``ban'' on all curves compatible with
orientations we selected in (C) that led to this configuration.

How to check that the forbidden configurations obtained at (A) cases exhaust all
curves of the class? We can describe the curves using boolean variables. Namely,
for each index of the fraction $i$ and for each valid base orientation $b$ we
introduce a boolean variable $x_{i,b}$, which is true if and only if the base
orientation at fraction $i$ is $b$. Our restrictions can be written as a CNF
formula over these boolean variables.  If this formula is unsatisfiable, it
means that all curves have the dilation at least $\omega_-$. Otherwise, if all
pairs are processed and the resulting formula has a model, then it will give us
a curve with $\WD_p\le\omega_+$.

The satisfiability problem of Boolean formulas (SAT-problem) is well studied
(see~\cite{BHM09}). There are many software libraries available to solve SAT tasks
(so-called SAT-solvers). Because the SAT belongs to the NP class, there is
no proven polynomial estimates for SAT-solvers.
The empirical fact we observed is that \textit{modern SAT-solvers work well on CNF
formulas, arising in the problem of finding optimal curves}.

\paragraph{Technical details.}
Let us give some more details of our algorithm that may have significant impact
on it's performance. When we subdivide pairs of fractions, we pick the
pair with the highest upper bound on the dilation. And when we do the subdivision,
we divide only one fraction in a pair (and keep pairs ``balanced'', i.e. the
depth of subdivision differs at most by $1$). If a pair falls into both
cases (A) and (B), i.e. $\omega_- \le L < U \le \omega_+$, then we choose (B),
i.e. the pair is discarded (not to enlarge CNF formula). 

The SAT-solver is used in the simplest way: every time we call the solve method,
we rebuild the CNF formula in it from scratch. To reduce computational costs of
this, we do this every $\lfloor1.3^k\rfloor$-th subdivision step.

To estimate the minimal dilation of a pointed prototype we use the bisection
algorithm many times. Suppose that we know some lower and upper bounds $(L,U)$ on the
minimal dilation. We put $\omega_- := \frac23L+\frac13U$ and $\omega_+ :=
\frac13L+\frac23U$. If there is a curve with $\WD\le\omega_+$, then the bounds
are updated as $L':=L$, $U':=\frac13L+\frac23U$. If every curve has
$\WD\ge\omega_-$, then $L'=\frac23L+\frac13U$, $U'=U$.

We estimate the minimal dilation for a sequence of pointed prototypes in 
several ``epochs''. In the first epoch we estimate the minimal dilation of each
pointed prototype with some relative error $\eps$ and keep only possibly
minimal prototypes. In the second epoch the relative error is taken smaller,
e.g. $\eps'=\eps/4$. And so on.

\subsection{Computer-assisted proof of Theorem~\ref{th_YE}}

In this section we will give a computer-assisted proof of Theorem 1 that
consists of two steps: finding the exact value of the dilation and proving that
it is minimal.

Our main tool that allows to find the exact value of the dilation is the following result.

\begin{theoremA}[\cite{SB08}, Theorem 7]
    If a plane Peano monofractal curve $\gamma$ of genus $g$ satisfies
    $\omega_\infty<\omega_2$, where $\omega_p:=\WD_p(\gamma)$,
    then the $\ell_2$-dilation of $\gamma$ is attained on a
    pair of vertices of fractions of order $k$, where
\begin{equation}
    \label{depth}
    k < \mathrm{depth}(\gamma)+3 +\log_g\frac{\omega_2^2}{4(\omega_2-\omega_\infty)}.
\end{equation}
\end{theoremA}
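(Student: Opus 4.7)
The natural plan is to start from an extremal pair $(x_0, y_0) = (\gamma(t_1^*), \gamma(t_2^*))$ achieving the supremum $\omega_2 = \WD_2(\gamma)$~--- existence is granted by continuity, compactness, and the Shchepin observation recalled in the excerpt, which places such a pair in non-adjacent sub-fractions of some junction. Set $T := t_2^* - t_1^*$, so that $\|y_0-x_0\|_2^2 = \omega_2 T$ and $\|y_0-x_0\|_\infty^2 \le \omega_\infty T$. The goal is to construct vertices $v \in F_k$ and $w \in G_k$ of two fractions of order $k$ such that $\|v-w\|_2^2 \ge \omega_2 (t_w - t_v)$ for some visit times $t_v < t_w$, with $k$ bounded as claimed.

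First I would localize: let $F_k \ni x_0$, $G_k \ni y_0$ be the fractions of order $k$. Since new junctions appear only up to order $\mathrm{depth}(\gamma)$, for $k > \mathrm{depth}(\gamma)$ the configuration of $(F_k, G_k)$ is controlled by self-similarity. Iterating Shchepin's observation on the sub-fractions of the junction chain yields $T \ge g^{-\mathrm{depth}(\gamma) - O(1)}$ (the non-adjacent sub-pair at the stabilized depth contributes a time-gap at least the time-size of one fraction).

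The critical ingredient comes from the hypothesis $\omega_\infty < \omega_2$. Writing $y_0 - x_0 = (a, b) \in \R^2$ with $|a| \ge |b|$, one has $a^2 = \|y_0-x_0\|_\infty^2 \le \omega_\infty T$ while $a^2 + b^2 = \omega_2 T$; subtracting gives
$$b^2 \ge (\omega_2 - \omega_\infty)\, T, \qquad \text{hence} \qquad |b| \ge \sqrt{(\omega_2 - \omega_\infty) T}.$$
Thus the extremal displacement has a strictly diagonal component of quantitatively controlled length. I would then select vertices $v \in F_k$, $w \in G_k$ whose coordinate-wise displacements from $x_0, y_0$ are sign-matched with $(a, b)$. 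This yields $\|v-w\|_2^2 \ge \|y_0-x_0\|_2^2 + 2|b|/s^k - O(1/s^{2k})$, while the visit-time shift satisfies $|(t_w - t_v) - T| \le 2/g^k$. Demanding $\|v-w\|_2^2 \ge \omega_2 (t_w-t_v)$ reduces to $|b|\, s^k \gtrsim \omega_2 + O(1)$. Substituting $|b| \ge \sqrt{(\omega_2-\omega_\infty)T}$ with $T \ge g^{-\mathrm{depth}(\gamma) - O(1)}$ and using $g = s^2$, the inequality rearranges to $k \le \mathrm{depth}(\gamma) + 3 + \log_g \frac{\omega_2^2}{4(\omega_2-\omega_\infty)}$ after careful bookkeeping of constants.

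The main difficulty is the vertex-selection step. The sixteen vertex pairs $(v,w) \in F_k \times G_k$ are constrained by the discrete positions of $x_0, y_0$ inside their fractions, and the naive sign-matching may fail along one axis when $x_0$ lies near a fraction boundary. Moreover, the visit time $t_v$ depends on which vertex is chosen: entrance and exit vertices are visited at the endpoints of $F_k$'s time-interval, whereas the remaining two are visited at curve-dependent interior times. Navigating this combinatorics and verifying $t_v < t_w$ with $t_w - t_v$ close to $T$ across all cases~--- presumably by invoking the repetitive structure of derived junctions above depth $\mathrm{depth}(\gamma)$~--- will be the delicate part of the proof.
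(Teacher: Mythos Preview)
The paper does not prove Theorem~A; it is quoted from \cite{SB08} (their Theorem~7) and used as a black-box tool in the computer-assisted proof of Theorem~\ref{th_YE}. There is therefore no proof in the present paper to compare your attempt against.

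That said, your outline matches what one expects the mechanism in \cite{SB08} to be: the strict gap $\omega_2-\omega_\infty>0$ forces the extremal displacement vector to have a quantitatively nontrivial component along \emph{both} coordinate axes, and this is exactly what allows one to replace the extremal pair by nearby fraction vertices without losing the ratio, once the subdivision is fine enough relative to $(\omega_2-\omega_\infty)^{-1}$. Your estimate $b^2\ge(\omega_2-\omega_\infty)T$ is the right starting inequality, and the scaling you arrive at is consistent with the stated bound. However, your proposal is explicitly a plan rather than a proof: you correctly flag the vertex-selection step and the control of visit times $t_v,t_w$ as the delicate part, and then stop. To turn this into an actual argument you would need to carry out that case analysis (or consult the original proof in \cite{SB08}).
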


Recall that the depth of a curve is the largest number of the curve subdivision
that contains a junction that is not similar to junctions of less order. The minimal
YE-curve $\gamma_{\mathrm{YE}}$, see Fig.1, has depth $1$ because derivatives of
it’s junctions of first subdivision are similar to that junctions.  For our
curve we have $g=25$, $\omega_2=5\frac{43}{73}$ and $\omega_\infty=5\frac13$.
Hence from~(\ref{depth}) it follows that $k\le 5$ and we can calculate exact
value of $\WD_2$ using 5-th subdivision only.

The rigorous proof of the equality
$\omega_2=\WD_2(\gamma_{\mathrm{YE}})=5\frac{43}{73}=5.5890...$ goes in the
following way: first, we compute approximations for $\omega_2$ and
$\omega_\infty$ and prove that they differ from presumed
values $5\frac{43}{73}$ and $5\frac13$, say, less than $0.01$. These approximate
computations allow us to estimate the logarithm in~(\ref{depth}) and to find the
value of dilation using 5-th subdivision. 

Let us describe the computations that show that YE curve has minimal
$\ell_2$-dilation among all Peano monofractal curves of genus $\le 6\times 6$.
For brevity we will write $\WD=\WD_2$.
As we already mentioned, there is one $2\times2$ curve (Hilbert curve), it has
$\ell_2$-dilation $6$; the minimal $3\times 3$ curve has $\WD=5\frac23$.
Our computations show that minimum $\ell_2$-dilation of plane monofractal curves
of genus $4\times 4$ is greater than $5.9$.

It was proven in~\cite{B14} that plane Peano monofractal curves fall into three categories:
\begin{itemize}
    \item side: curves with entrance and exit in the vertices of the same side;
    \item diagonal: curves with entrance and exit in the opposite vertices;
    \item median: curves with entrance in some vertex and exit in the middle of
        the opposite side (or vice versa).
\end{itemize}

In the cases of genus $5\times5$ or $6\times 6$ we have processed only curves without
diagonal steps in prototype; other curves have diagonal junctions and by
Theorem~\ref{th_diag} they satisfy $\WD\ge5\frac23$. Our computations provide the following estimates:
median $6\times6$ curves have $\WD>6.5$; side $6\times6$ curves have $\WD>5.6$;
diagonal $6\times6$ curves do not exist; median $5\times 5$ curves have
$\WD>6.5$; diagonal $5\times 5$ curves have $\WD>5.9$.

The remaining case is side $5\times5$ curves.
Such curves with non-YE prototype have dilation at least $5.7$.
There are $2^{25}$ YE-shaped curves. To prove that our curve is minimal,
for each of the $25$ fractions of the YE-prototype we have fixed
the base orientation that does not match our curve. It turned out that for
curves with such constraints $\WD>5.6$.

\subsection{Software and running stats}

Our code is a Python package \texttt{peano} which can be found on the GitHub,~\cite{gitYM}.
We have used the library Glucose3~\cite{ALS13} and the \texttt{python-sat} (PySAT) wrapper~\cite{IMM18}
for SAT-solving. All calculations are based on rational arithmetic, with
\texttt{quicktions} implementation.

The \texttt{peano} package implements Peano multifractals of
arbitrary dimension, multiplicity and genus.  Let us list main capabilities of our library:
\begin{itemize}
    \item define Peano multifractals in terms of prototypes and base
        orientations;
    \item find entrance and exit points;
    \item find vertex moments: $\{t\colon \gamma(t)\in\{0,1\}^n\}$ and,
        more generally, first/last face moments for cube faces of
        arbitrary dimension;
    \item generate entrance/exit configurations given dimension, multiplicity
        and genus;
    \item effectively generate all entrance/exit configurations on facets;
    \item generate pointed prototypes given entrance/exit configuration;
    \item estimate dilation of a Peano multifractal for any coordinate-monotone
        norm (possibly using face moments);
    \item estimate minimal dilation for a sequence of pointed prototypes.
\end{itemize}

The code was thoroughly tested on a set of known Peano curves. The main source
of curves in the 3D case was the paper~\cite{H11} with many detailed
descriptions. We have used also~\cite{HW10} for a bunch of 2D curves
and~\cite{KS18} for the Tokarev curve.

In Table~\ref{tbl_run} we give some basic statistics for three search runs:
Computation 1, the search of a minimal curve among facet-gated 3D bifractals of
genus $2\times2\times2$; Computation 2, the search among facet-gated 4D monofractals of genus
$2\times2\times2\times2$; and the proof of minimality of YE curve, Theorem~\ref{th_YE}.
The commands to perform these runs are given in the README file of the package.
Let us give some comments on Table~\ref{tbl_run}.

The row ``Peano curves'' contains the numbers of all Peano curves with pointed
prototypes that were processed during the search. This number is rather small
for the Computation 1; it seems that the Spring curve can be found without SAT
solvers at all. The number of curves in the YE proof is huge; main contribution
comes from $11$ diagonal $5\times 5$ pointed prototypes (without diagonal steps), each
of them contains $4^{25}$ curves.

The size of a SAT problem for a CNF formula is determined by three parameters: number of variables,
number of clauses in the formula and total number of literals in clauses. We
show the SAT problem having the most literals.

\begin{table}
\begin{tabular}{|l|l|l|l|}
    \hline
    & Comp. 1 & Comp. 2 & YE proof \\
    \hline
    Gate configurations & 35 & 3 & 3 \\
    Pointed prototypes  & 909 & 1761 & 2763 \\
    Peano curves        & 122955 & $115\times10^6$ & $1.24\times10^{16}$ \\
    \hline
    Bisect steps        & 3301 & 8345 & 8472 \\
    Pairs of fractions & $10.4\times10^6$ & $143\times10^6$ & $204\times10^6$ \\
    Banned pairs  & 40840 & 815466 & $1.41\times10^6$ \\
    \hline
    SAT solve calls     & 35764 & 98164 & 47984 \\
    SAT largest problem: & & & \\
    \hspace{5pt} literals   & 19515 & 7825 & 480792 \\
    \hspace{5pt} clauses    & 7019  & 2405 & 82918 \\
    \hspace{5pt} variables  & 819   & 275 & 1713 \\
    \hline
    Running time & 4 min & 45 min & 1 h 20 min \\
    \hline
\end{tabular}
    \caption{Running stats}\label{tbl_run}
\end{table}

\section{Lower bounds}
\label{sec_low}

In this section we denote $|x|:=\|x\|_2$ for brevity.
Given two points $\gamma(t_1)$, $\gamma(t_2)$ on a curve, we define
the \textit{cube-to-linear ratio} as
$|\gamma(t_1)-\gamma(t_2)|^d/|t_2-t_1|$. The curve dilation equals
the supremum of cube-to-linear ratios over all pairs.

\subsection{Proof of Theorem~\ref{th_diag}}

\def\thetheorem{\ref{th_diag}}
\begin{theorem}
    If a plane Peano monofractal curve $\gamma$ has a diagonal junction, then
    $\WD_2(\gamma)\ge5\frac23$.
\end{theorem}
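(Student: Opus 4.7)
The plan is to analyze the local geometry of the diagonal junction, enumerate the resulting monofractal types via Bauman's classification, and extract a system of H\"older-type inequalities that forces the dilation up to $17/3$.

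After rescaling, I would take $F=[0,1]^2$ and $G=[1,2]^2$ meeting only at the vertex $C=(1,1)$, with $\gamma(0)=E_F$, $\gamma(1)=C$, $\gamma(2)=E_G$ along the junction. Because $\gamma$ leaves $F$ and enters $G$ precisely at the vertex $C$, the whole monofractal must have vertex-type entrance and exit; by Bauman's classification~\cite{B14} it is therefore either diagonal- or side-type (the median case is excluded), which restricts $E_F$ and $E_G$ to a short explicit list of candidates. Introducing unknown times $a,b\in(0,1)$ for the visits to the two other vertices of $F$ and $c,d$ (time-shifted by $1$) for the corresponding vertices of $G$, every pair among the seven distinguished points yields a linear inequality $\|\gamma(t_i)-\gamma(t_j)\|^2\le \omega(t_j-t_i)$ with $\omega:=\WD_2(\gamma)$. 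A summation of the six inequalities coming from the pairs inside $F$ alone already forces the classical bound $\omega\ge 4$.

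To push the bound up to $17/3$, I would bring in the first-level sub-fractions of $F$ and $G$ adjacent to $C$, whose entrance/exit gates are, by self-similarity, rescaled copies (by factor $1/s$) of the distinguished points already in play and are determined up to a finite set of base orientations. The resulting pairs, especially those coupling a sub-fraction near $C$ with a vertex far from $C$, introduce new inequalities at scale $1/s$ that, combined with the vertex constraints, should make the full system infeasible below $17/3$; the specific configuration attaining ratio $17/3$ corresponds to a displacement of type $(\pm 4/s,\pm 1/s)$ traversed in about $3/s^2$ units of time, giving squared distance $17/s^2$ over time $3/s^2$. The main obstacle is a finite case analysis over the admissible base orientations of the two sub-fractions neighboring $C$, showing that in each case such a high-ratio pair either exists directly or is forced by combining the vertex and sub-fraction inequalities. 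The extremal Meurthe $3\times 3$ curve, which attains $\omega=17/3$ with equality, provides a natural sanity check that the chosen pairs are sharp.
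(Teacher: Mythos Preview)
Your proposal has a concrete error, and beyond it underestimates how much of the proof lives in the case analysis.

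\textbf{The median case is \emph{not} excluded.} You argue that because the junction point $C$ is a vertex of both $F$ and $G$, both gates of the monofractal must be vertices, ruling out the median type. This is false. A median monofractal has one gate at a vertex and one at a side midpoint; a time-reversed copy placed as $F$ then exits at a vertex, and a standard copy placed as $G$ enters at a vertex, so a diagonal junction is perfectly compatible with median type. The paper's proof treats diagonal, median, and side curves separately, and the median case is handled by a nontrivial argument (farthest-point estimates plus a $2\times 2$ packing dichotomy). Your outline simply omits this case.

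\textbf{Looking only near $C$ is not enough.} You plan to examine just the two first-level sub-fractions adjacent to $C$ together with the four vertices of $F$ and $G$. The paper, by contrast, tracks the sub-fractions $F_4,F_3,F_2,F_1$ before $C$ and $G_1,G_2,G_3,G_4$ after $C$; its key Lemma produces a pair $A\in F_i$, $B\in G_j$ with $i+j=6$, $|A-B|^2\ge 32$ and a time gap at most $6-6/17$, giving $\WD_2\ge 32\cdot 17/96=17/3$. The sharp pairs thus live at the \emph{third} sub-fraction on each side, not adjacent to $C$. Moreover, the hardest configuration---a side curve whose diagonal junction keeps the direction---cannot be resolved by a direct distance/time pair at all; the paper needs an auxiliary ``accelerating/decelerating'' lemma to first force the orientations of the sub-fractions and then apply the main lemma. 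Nothing in your outline plays that role; the assertion that combining vertex and sub-fraction inequalities ``should make the full system infeasible below $17/3$'' is exactly the place where the real work is, and the paper shows it requires qualitatively more structure than you have set up.

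In summary: the exclusion of the median case is wrong, and the plan to stay within two sub-fractions of $C$ misses the configurations that actually carry the bound.
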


Suppose that consecutive fractions $F$ and $G$ of the curve $\gamma$ form a diagonal
junction. Let us put the transition point from $F$ to $G$ to the origin
$O=(0,0)$ and suppose that $G$ lies in the first, positive quadrant (right and above $O$)
and $F$ lies in the third, negative quadrant (left and below $O$). We also consider
fractions $F_1$ and $G_1$ of the next subdivision, such that $F\supset F_1\ni O$
and $G\supset G_1\ni O$. Moreover, let $G_2,G_3,G_4$ be consecutive fractions
that go after $G_1$ and let $F_4,F_3,F_2$ be consecutive fractions before $F_1$
(so, $F_2$ meets $F_1$). For convenience, we assume that $F_i$ and $G_i$ have
unit side length (and passed in the unit time). See Figure~\ref{fig_lem1} for an example.
We will treat several cases, illustrated in Figure~\ref{fig_cases}. Some of them
will require the following Lemma.
\begin{lemma}
    \label{lem_AB}
    Let $A$ be the left bottom vertex of $F_i$, and $B$ be the right upper
    vertex of $G_j$ and the conditions hold true:
    \begin{itemize}
        \item $i+j=6$,
        \item the distance from $A$ to the entrance of $F_i$ is at least
            $1$;
        \item the distance from $B$ to the exit of $G_j$ is at least
            $1$;
        \item $|A-B|\ge\sqrt{32}$.
    \end{itemize}
    Then $\WD_2(\gamma)\ge 5\frac23$.
\end{lemma}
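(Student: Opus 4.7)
The plan is to chain three applications of the cube-to-linear ratio bound. First I would normalize time so that $F_i, F_{i-1}, \ldots, F_1, G_1, \ldots, G_j$ occupy the consecutive unit intervals $[0,1], [1,2], \ldots, [5,6]$; the hypothesis $i+j=6$ ensures that the entrance $E_i := \gamma(0)$ of $F_i$ and the exit $X_j := \gamma(6)$ of $G_j$ are visited at times $0$ and $6$ respectively. Let $\tau_A, \tau_B$ be the moments at which the volume-parametrized curve reaches $A$ and $B$, so $\tau_A \in [0,1]$ and $\tau_B \in [5,6]$. Write $W := \WD_2(\gamma)$.

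Next, I would extract time lower bounds from the two vertex-separation hypotheses. By the definition of $W$, applied to the pair $(E_i, A)$,
\[
\tau_A \;=\; \tau_A - 0 \;\ge\; \frac{|E_i - A|^2}{W} \;\ge\; \frac{1}{W},
\]
and symmetrically, using $|B - X_j| \ge 1$, the pair $(B, X_j)$ yields $6 - \tau_B \ge 1/W$. Adding these two inequalities gives the time bound
\[
\tau_B - \tau_A \;\le\; 6 - \frac{2}{W}.
\]

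Finally, I would apply the dilation bound a third time, to the pair $(A,B)$ itself, invoking $|A - B|^2 \ge 32$:
\[
W \;\ge\; \frac{|A - B|^2}{\tau_B - \tau_A} \;\ge\; \frac{32}{6 - 2/W}.
\]
Clearing denominators yields $6W - 2 \ge 32$, i.e.\ $W \ge 17/3 = 5\tfrac{2}{3}$. There is no substantive obstacle: the ordering $0 \le \tau_A \le 1 \le 5 \le \tau_B \le 6$ needed for all three pairs to be legitimate time-ordered points on the curve is automatic from $A \in F_i$ and $B \in G_j$. The real content of the lemma is the way the three numerical hypotheses are calibrated — two unit gaps at the ends and $|A-B|^2 \ge 32$ in the middle — so that the self-referential inequality in $W$ cancels cleanly and produces exactly the target threshold $5\tfrac{2}{3}$.
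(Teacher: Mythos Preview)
Your argument is correct and is essentially the same as the paper's: both chain the dilation bound on the three pairs $(E_i,A)$, $(A,B)$, $(B,X_j)$ to squeeze $\tau_B-\tau_A$ from above and then bound $|A-B|^2/(\tau_B-\tau_A)$ from below. The only cosmetic difference is that the paper first assumes $\WD_2(\gamma)\le 5\tfrac23$ to turn the unit-distance hypotheses into the concrete time bound $3/17$, whereas you keep $W$ symbolic and solve the self-referential inequality $W\ge 32/(6-2/W)$ directly; the arithmetic is identical.
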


\begin{figure}[!h]
    \begin{center}
    \includegraphics{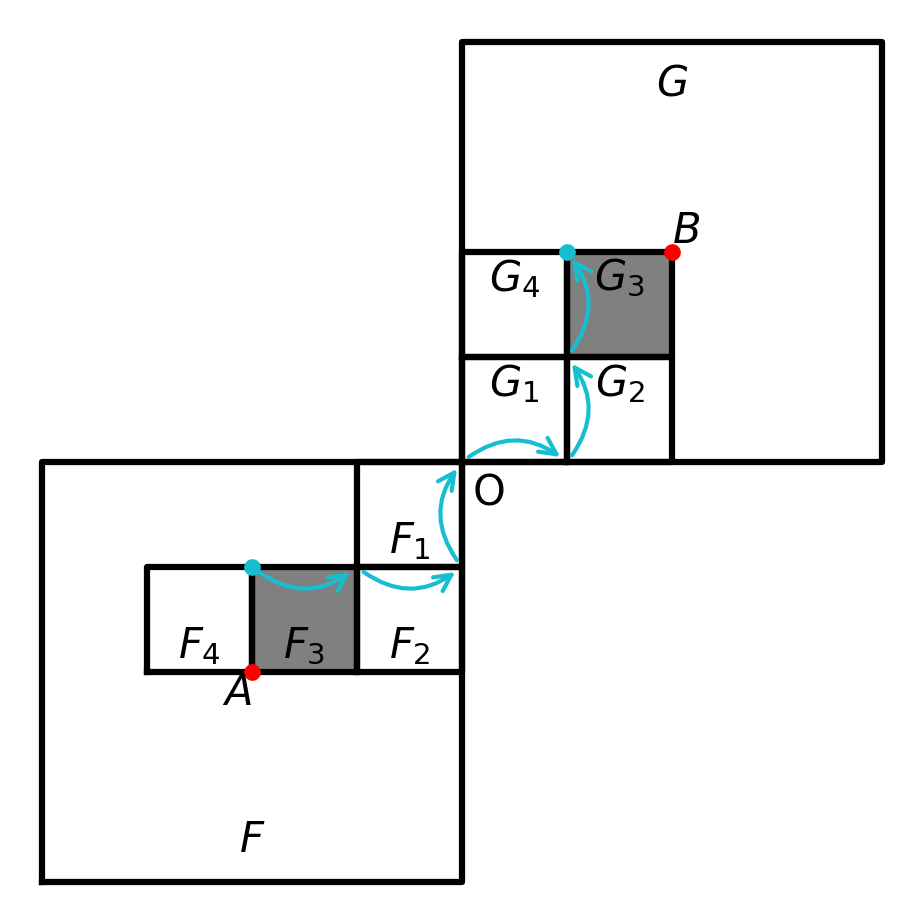}
    \end{center}
    \caption{Lemma~\ref{lem_AB}, $i=j=3$ (example).}
    \label{fig_lem1}
\end{figure}

\begin{proof}
    Suppose that $\WD_2(\gamma)\le 5\frac23$. Then the curve spends
    at least $\frac{3}{17}$ units of time to pass one unit of length.
    So the time interval between $A$ and $B$ is
    at most $6-\frac{6}{17} = \frac{6\cdot 16}{17}$ and the cube-to-linear ratio for $(A,B)$ is at least
    $$
    \frac{|A-B|^2}{t_B-t_A} \ge \frac{32\cdot 17}{6\cdot 16}=\frac{17}3=5\frac23.
    $$
\end{proof}

Let us return to the proof of the theorem.

\paragraph{Diagonal curves.}
If $\gamma$ is diagonal, then the farthest (from origin) point $A$ of $F_3\cup F_4$ has
$\|A-O\|_1 \ge 5$. The same holds for the farthest point $B$ of $G_3\cup G_4$ and hence
$\|A-B\|_1\ge 10$, so $|A-B|^2 \ge 5^2+5^2=50$. The time interval from $A$
to $B$ is at most $8$, so the cube-to-linear ratio for this pair is at least
$6\frac14$.

\paragraph{Median curves.}
If four fractions $G_1\ldots G_4$ do not lie in a $2\times 2$ square, then the
$\ell_1$-distance from the farthest point $B$ of $G_1\cup\ldots\cup G_4$ to $O$ is
at least $5$. And in any case the $\ell_1$-distance from the farthest point $A$
of $F_1\cup F_2\cup F_3$ to $O$ is at least $4$. Hence $\|A-B\|_1\ge 9$ and
$|A-B|^2\ge 4^2+5^2=41$. The time interval from $A$ to $B$ is at most $7$, so
the cube-to-linear ratio is $\ge\frac{41}{7}>5\frac23$. If fractions $F_1\ldots F_4$
do not lie in a $2\times 2$ square, the same arguments apply.

Finally, if both quads $F_1\ldots F_4$ and $G_1\ldots G_4$ lie in $2\times 2$ squares,
then the dilation bound follows from the Lemma~\ref{lem_AB} with $i=j=3$.

\begin{figure}
    \includegraphics[width=15cm]{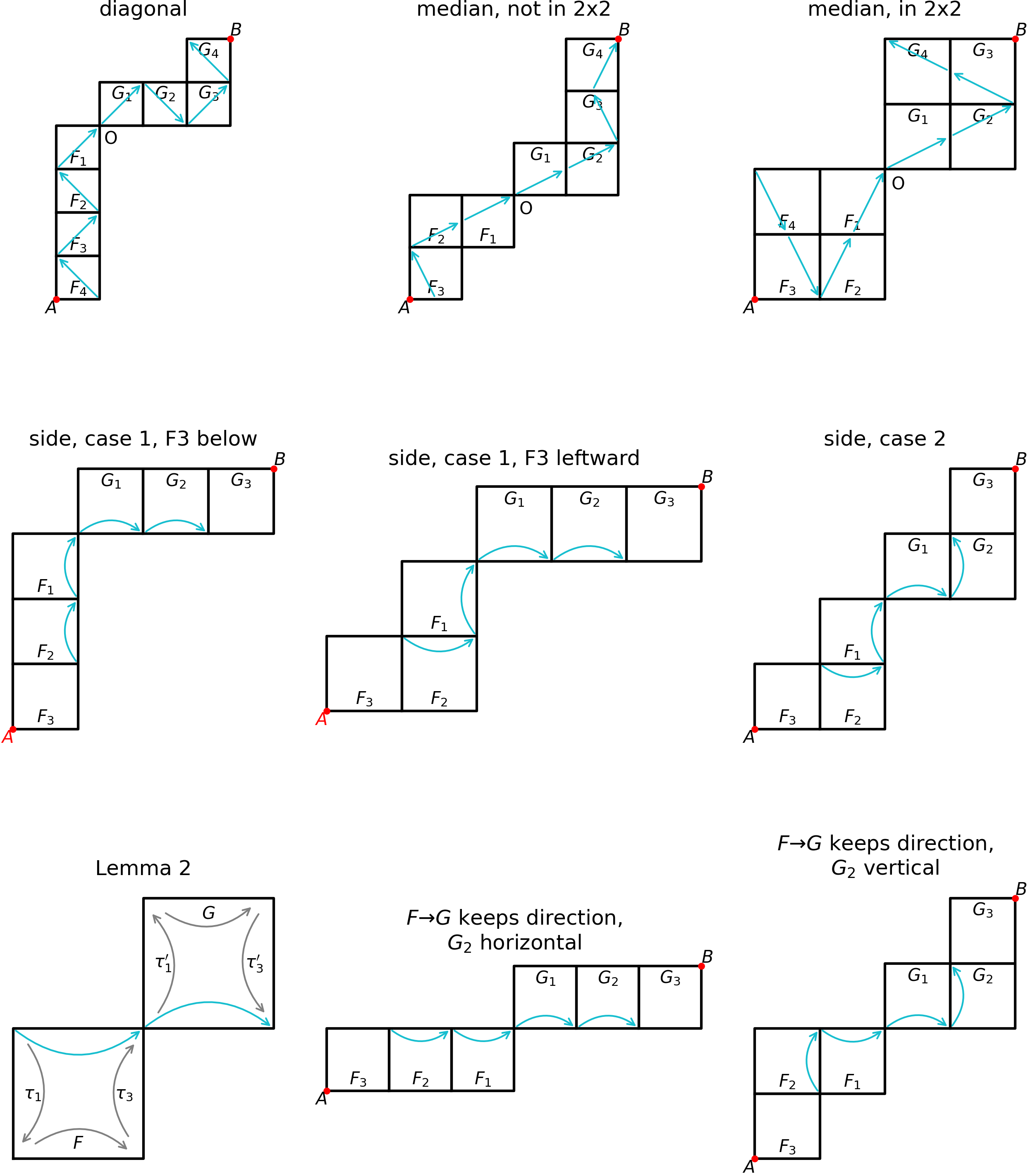}
    \caption{Cases in the proof of Theorem~\ref{th_diag}.}\label{fig_cases}
\end{figure}

\paragraph{Side curves.}
If the directions (from entrance to exit) of consecutive fractions coincide,
then we say that the corresponding junction \textit{keeps the direction}.

We will start with the case when $\gamma$ has no diagonal junctions that keep the
direction.

W.l.o.g. let $G_1$ has the horizontal direction, so $F_1$ is vertical. It follows
that $F_2$ is below $F_1$ and $G_2$ is to the right from $G_1$.

\textit{Case 1}. $G_2$ is horisontal. Then $G_3$ is to the right from $G_2$. Let
us consider fraction $F_3$: it is below $F_2$ or to the left from $F_2$; it
cannot be to the left from $F_1$ because in that case $F_3\to F_2$ will be a
diagonal junction that keeps the direction. In both cases we can apply
Lemma~\ref{lem_AB} for $F_3$ and $G_3$.

If $F_2$ is vertical, the reasoning is the same as in the Case 1.

\textit{Case 2}. $G_2$ is vertical, and $F_2$ is horizontal. Then $G_3$ must be
above $G_2$ and $F_3$~--- to the left from $F_2$. We apply Lemma~\ref{lem_AB}
for $F_3$ and $G_3$.

So, it remains to consider the case of a side curve with a diagonal junction keeping
direction. We may assume that the junction $F\to G$ keeps the direction.

Denote by $\tau_1(\gamma),\tau_2(\gamma)$ and $\tau_3(\gamma)$ the time
intervals from the curve entrance to the next visited vertex, then to the third
vertex and, finally, from it to the exit. We say that a curve is
\textit{accelerating} if $\tau_1(\gamma)>\tau_3(\gamma)$ and is
\textit{decelerating} if $\tau_1(\gamma)<\tau_3(\gamma)$. We will apply these
definitions to the fractions of $\gamma$.

\begin{lemma}
    \label{lem_accelerate}
    If a diagonal junction $F\to G$ keeps the direction and
    $\WD_2(\gamma)\le5\frac23$, then the first fraction $F$ is decelerating and
    the second, $G$, is accelerating. Moreover, we have
    $$
    \tau_1(F)=\tau_3(G)\le\frac5{17}<\frac{6}{17}\le\tau_3(F)=\tau_1(G).
    $$
\end{lemma}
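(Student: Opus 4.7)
The plan is to place $F\subset[-1,0]^2$ with entrance $(-1,-1)$ and exit $O=(0,0)$, and $G\subset[0,1]^2$ with entrance $O$ and exit $(1,1)$, and to write $a=\tau_1(\gamma)$, $c=\tau_2(\gamma)$, $b=\tau_3(\gamma)$ (so $a+b+c=1$); after relabelling, assume $a\le b$. The very first observation is a global one: the two middle vertices of $\gamma$ itself are adjacent corners of $[0,1]^2$, so that pair has squared distance $2$ and time gap $c$; the hypothesis $\WD_2(\gamma)\le 17/3$ therefore forces $c\ge 6/17$, and hence $a+b\le 11/17$.

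Next I would observe that, since the junction keeps direction and both $F$ and $G$ are diagonal with direction $(1,1)$, each fraction is obtained from $\gamma$ by one of the four direction-preserving symmetries: the identity, the reflection across $y=x$, the $180^\circ$ rotation combined with time reversal, and the composition of the last two. Two of these preserve $(\tau_1,\tau_3)=(a,b)$, while the other two swap it to $(b,a)$, so the junction falls into one of four timing classes labelled by whether each of $F$ and $G$ has $(\tau_1,\tau_3)=(a,b)$ or $(b,a)$.

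The workhorse is the pair $(A,B)$, where $A$ is the vertex of $F$ visited just before $O$ (at time $-\tau_3(F)$) and $B$ is the vertex of $G$ visited just after $O$ (at time $\tau_1(G)$). A direct inspection gives $A\in\{(-1,0),(0,-1)\}$, $B\in\{(1,0),(0,1)\}$, and $|A-B|^2\in\{2,4\}$. In the spatial sub-cases where $A$ and $B$ are collinear through $O$ (squared distance $4$), the hypothesis forces $\tau_3(F)+\tau_1(G)\ge 12/17$. Combined with $a+b\le 11/17$, this rules out three of the four timing classes: the two ``matching'' classes yield $\tau_3(F)+\tau_1(G)=a+b\le 11/17<12/17$, an immediate contradiction; the class $(F,G)=((b,a),(a,b))$ yields $2a\ge 12/17$, i.e.\ $a\ge 6/17$, which together with $a\le b$ and $a+b\le 11/17$ forces $b\le 5/17<6/17\le a$, contradicting $a\le b$. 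The only surviving class is $(F,G)=((a,b),(b,a))$, giving $\tau_3(F)=\tau_1(G)=b\ge 6/17$ and $\tau_1(F)=\tau_3(G)=a\le 11/17-b\le 5/17$, together with $F$ decelerating and $G$ accelerating, exactly as claimed.

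The main obstacle is the ``non-collinear'' spatial sub-cases, in which $A,B$ are in $L$-configuration through $O$ (squared distance $2$), so that the $(A,B)$ pair alone produces only the weak inequality $\tau_3(F)+\tau_1(G)\ge 6/17$, insufficient to eliminate the bad timing classes. In those sub-cases I would pass to the derived junction $F'\to G'$, formed by the last sub-fraction of $F$ and the first sub-fraction of $G$, which still share the transition point $O$; by self-similarity of $\gamma$ it is again a direction-preserving diagonal junction at scale $1/s$, so the above analysis applies recursively. The finiteness of the depth of $\gamma$ guarantees termination at a level where the spatial sub-case is collinear and the desired contradiction is extracted.
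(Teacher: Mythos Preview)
Your setup places the entrance of $F$ at $(-1,-1)$ and the exit of $G$ at $(1,1)$; that is, you are treating $\gamma$ as a \emph{diagonal} monofractal. But the lemma is stated inside the ``Side curves'' paragraph of the proof of Theorem~\ref{th_diag}: the standing hypothesis there is that $\gamma$ is a side curve, so $F$ and $G$ are side curves too, and with $F=[-1,0]^2$, exit $O=(0,0)$, the entrance of $F$ is $(-1,0)$ (or $(0,-1)$), not $(-1,-1)$. Your opening ``global'' bound already fails in this setting: for a side curve the two middle vertices are edge-adjacent (squared distance $1$), which gives only $\tau_2(\gamma)\ge 3/17$, so the inequality $a+b\le 11/17$ that drives your elimination of the three bad timing classes is unavailable.

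The ingredient you are missing is a \emph{second} pair of far-apart vertices. In the side setting with the ``along the edges'' vertex order, the second vertex of $F$ is forced to be $(-1,-1)$ and the third vertex of $G$ is forced to be $(1,1)$; this pair has squared distance $8$ and time gap $2-\tau_1(F)-\tau_3(G)$, yielding $\tau_1(F)+\tau_3(G)\le 10/17$. Combined with $\tau_3(F)+\tau_1(G)\ge 12/17$ (your pair $(A,B)$, which in the side case automatically has squared distance $4$ --- the ``non-collinear'' sub-case cannot occur), one gets $\tau_1(F)+\tau_3(G)<\tau_3(F)+\tau_1(G)$, ruling out $(\tau_1,\tau_3)=(\tau_1',\tau_3')$ and forcing the claimed inequalities directly. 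A short separate count disposes of the ``diagonal vertex order'' case. Incidentally, even granting your diagonal reading, the recursion to the derived junction $F'\to G'$ does not close the argument: it yields the timing class of $(F',G')$ rather than of $(F,G)$, and finite depth only means the junction types eventually cycle --- it does not guarantee that a collinear sub-case is ever reached.
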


\begin{proof}
    Let us consider first the case when vertices are visited along the edges, as
    in Figure~\ref{fig_cases}.
    Denote $\tau_1:=\tau_1(F)$, $\tau_3:=\tau_3(F)$, $\tau_1':=\tau_1(G)$,
    $\tau_3':=\tau_3(G)$.
    Condition $\WD_2(\gamma)\le 5\frac23$ gives us inequalities
    $\tau_3 + \tau_1' \ge \frac{4}{5\frac23} = \frac{12}{17}$
    and
    $2-\tau_1-\tau_3' \ge \frac{8}{5\frac23} = \frac{24}{17}$,
    so
    $$
    \tau_1+\tau_3' \le \frac{10}{17} < \frac{12}{17} \le \tau_3 + \tau_1'.
    $$
    This excludes case $\tau_1=\tau_1'$, $\tau_3=\tau_3'$. Since $\gamma$ is a
    monofractal, the only possible case is
    $$
    \tau_1=\tau_3' \le \frac{5}{17} < \frac{6}{17} \le \tau_3=\tau_1'.
    $$

    Now, suppose that the first and the second vertex visited in $F$
    lie diagonally opposite of each other (and therefore, so do the third and
    the fourth vertex, that is, $O$). Then we have
    $\tau_3+\tau_1'\ge\frac{24}{17}$ and $\tau_1\ge\frac{6}{17}$. If
    $\tau_1=\tau_1',\tau_3=\tau_3'$, then
    $\tau_3+\tau_1'+\tau_3'+\tau_1\ge\frac{48}{17}>2$, which is impossible. If
    $\tau_1=\tau_3',\tau_3=\tau_1'$, then $\tau_3\ge\frac{12}{17}$ and
    $\tau_1+\tau_3\ge\frac{18}{17}$, which is also impossible. So, this case
    cannot take place.
\end{proof}

From Lemma we derive that the junction $F\to G$ is centrally-symmetric. Since
our junction keeps the direction, we may assume that $F_1$ and $G_1$ are both
horizontal.

If $G_2$ is also horizontal, then $F_2$ has the same direction and the squared
diameter of $F_3\cup\ldots\cup G_3$ is at least $40$, which gives
$\WD_2(\gamma)\ge\frac{40}6$. 

If $G_2$ is vertical, then $\gamma$ runs the distance at least $2$ in time $1+\tau$,
where $\tau:=\tau_1(G_2)$. Since the whole $F\to G$ junction is
centrally-symmetric, $\gamma$ runs the distance $\ge 4$ in time $2+2\tau$. Hence
$\WD_2(\gamma)\ge \frac{16}{2+2\tau}$. If $\WD_2(\gamma)\le 5\frac23$, then it gives
$\tau\ge\frac{7}{17}$. Lemma~\ref{lem_accelerate} implies that $G_2$ is
accelerating. It also follows that junction $G_2\to G_3$ cannot be diagonal, so
$G_3$ is above $G_2$. Finally, application of Lemma~\ref{lem_AB} for $F_3$ and
$G_3$ finishes the proof.

\subsection{Polycubic chains}
\label{subsect_poly}

In this section we will prove theorems~\ref{th_poly2},~\ref{th_poly3} and~\ref{th_poly4}.

\begin{definition}
    \label{def_poly}
    A sequence $Q_1,\ldots,Q_n$ of distinct $d$-dimensional cubes in
    $d$-dimensional Euclid space if called a \textit{polycubic chain} if in some
    coordinates we have $Q_1=[0,1]^d$ and $Q_{i+1}=Q_i+\delta_i$, $1\le i\le
    n-1$, for some translation vectors $\delta_i\in\{-1,0,1\}^d$.
\end{definition}

The \textit{length} of a chain is the number of elements (cubes) in it. Elements of a
chain are called chain's \textit{fractions}. The union $Q_1\cup
Q_2\cup\ldots\cup Q_n$ of all fractions is
called the \textit{body} of a chain. A subsequence of cubes $Q_i,\ldots,Q_{i+k}$
is called a \textit{segment} of a chain (of course, it is a polycubic chain
itself). A \textit{junction} if a pair of adjacent fractions (i.e., a segment of
length $2$).

A chain is \textit{facet-continuous} if adjacent fractions intersect by facets.
Note that by definition polycubic chains are continuous, i.e., adjacent
fractions have non-empty intersection.

The \textit{dilation} of a polycubic chain is the maximal dilation of bodies of
its segments. (Recall the definition~(\ref{dilation}) of the set dilation.)

It is obvious that the sequence of fractions (of some order) of a cubically decomposable curve
is a polycubic chain and the dilation of the curve is at least the
dilation of this chain.

We say that fractions of a chain are collinear (coplanar), it the centers of them are
collinear (coplanar).

\paragraph{Dimension 2.}

We will use the following result.

\begin{theoremB}[\cite{MS21}, Theorem 1]
    If a plane surjective curve $\gamma\colon[0,1]\to[0,1]^2$ has entrance
    $\gamma(0)$ and exit $\gamma(1)$ on the opposite sides of $[0,1]^2$, then
    $\WD_2(\gamma)>4$.
\end{theoremB}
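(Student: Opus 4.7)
The plan is to assume $\WD_2(\gamma)\le 4$ for contradiction, which gives the $(1/2)$-H\"older bound $\|\gamma(s)-\gamma(t)\|\le 2\sqrt{|s-t|}$. Writing $A=\gamma(0)=(a,0)$ on the bottom side and $B=\gamma(1)=(b,1)$ on the top, let $t_{ij}\in[0,1]$ be a time when the corner $P_{ij}$ of $[0,1]^2$ is visited (existence by surjectivity). All constraints will take the form $|t-t'|\ge\|\gamma(t)-\gamma(t')\|^2/4$.

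First, I would classify the Hamiltonian temporal orderings of the four corners by the number $k\in\{0,1,2\}$ of ``diagonal'' consecutive pairs (those at squared distance $2$). Such an ordering has total time-span at least $(3+k)/4$, so $k=2$ is immediately excluded, $k=1$ is saturated (forcing $t_{C_1}=0$, $t_{C_4}=1$, hence $A=C_1$ and $B=C_4$ are diagonally opposite corners), and only the perimeter ordering $k=0$ admits slack. For the perimeter case, combining $t_{C_4}-t_{C_1}\ge 3/4$ with $t_{C_1}\ge\|A-C_1\|^2/4$ and $t_{C_4}\le 1-\|B-C_4\|^2/4$ yields $\|A-C_1\|^2+\|B-C_4\|^2\le 1$; under the opposite-sides hypothesis this collapses the eight perimeter orderings down to two ``generic'' ones (with $C_1$ a bottom corner and $C_4$ the corresponding top corner, under $a^2+b^2\le 1$ or the symmetric $(1-a)^2+(1-b)^2\le 1$) together with a handful of degenerate orderings in which $A$ and $B$ are themselves diagonally opposite corners.

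To rule out the residual configurations I would introduce further surjectivity-forced points, most naturally the midpoints $(0,1/2)$, $(1,1/2)$ of the vertical sides and the center $(1/2,1/2)$, writing their distance-time constraints against $A$, $B$, and the four corners. In the degenerate corner cases these become infeasible by direct arithmetic at $C=4$: the constraint on $(1,1/2)$ forced by the diagonally placed top-left corner clashes with the constraint forced by the bottom-left corner, emptying the feasibility region. For the generic $a^2+b^2\le 1$ configuration, however, the finite-point system can remain individually feasible at $C=4$, and the argument must be supplemented by a continuity ingredient: for every height $y_0\in[0,1]$ the preimage $\gamma_y^{-1}(y_0)$ has time-diameter $\ge 1/4$ (since $\gamma$ restricted to it maps surjectively onto the horizontal segment $[0,1]\times\{y_0\}$), and one must combine these pointwise bounds with the global monotone transit of the $y$-coordinate from $0$ to $1$ that is forced precisely by the opposite-sides condition. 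The main obstacle, and the true content of the theorem, lies in assembling this integrated constraint sharply enough to exceed what $C=4$ allows; the opposite-sides hypothesis is essential, because without it the extremal configuration realized by the triangle-based Sierpinski curve (with $\WD_2=4$) cannot be excluded.
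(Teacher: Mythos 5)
There is nothing in this paper to compare your attempt against: Theorem~B is not proved here, it is imported verbatim from~\cite{MS21} (Theorem~1 of that paper) and used as a black box in the proof of Theorem~\ref{th_poly2}. So your proposal can only be judged on its own merits, and on those merits it is an honest but incomplete sketch rather than a proof. The machinery you set up is sound as far as it goes: under the assumption $\WD_2(\gamma)\le 4$ each pair of visited points gives $|t-t'|\ge\|\gamma(t)-\gamma(t')\|^2/4$, the classification of corner orderings by the number $k$ of diagonal consecutive steps correctly yields a time-span of at least $(3+k)/4$, so $k=2$ dies immediately and $k=1$ forces full saturation with $A,B$ diagonally opposite corners (and that degenerate case is indeed killed by the side midpoints~--- e.g.\ with corner times $0,\frac14,\frac34,1$ the constraints on $(0,\frac12)$ alone are already infeasible). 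The reduction of the perimeter case to $\|A-C_1\|^2+\|B-C_4\|^2\le 1$ is also fine.

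The genuine gap is exactly where you flag it: in the generic perimeter configuration you concede that the finite system of distance--time constraints on corners, side midpoints and the center remains feasible at the threshold $4$, and the ``integrated'' argument you gesture at is not supplied. Two specific problems. First, the phrase ``global monotone transit of the $y$-coordinate \ldots forced by the opposite-sides condition'' is not literally true~--- $y(\gamma(t))$ need not be monotone; at best the first-hitting time of each height is monotone, and you would have to say precisely which monotone structure you use. Second, and more seriously, the pointwise bound ``the preimage of each height has time-diameter $\ge 1/4$'' does not integrate: these preimages for different heights can overlap almost completely in time, so summing or averaging them gives nothing beyond the trivial bound. Producing a quantitative accumulation of incompatible constraints that strictly exceeds what $C=4$ permits is the entire content of the theorem (note that the Sierpi\'nski-type curve attains $\WD_2=4$ exactly, so the bound is tight up to the strictness of the inequality and any such argument must exploit the opposite-sides hypothesis in an essential, not merely bookkeeping, way). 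As written, the proposal reduces the theorem to its hardest case and stops there; you would need to consult the actual argument of~\cite{MS21} or construct the missing global estimate to close it.
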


\begin{lemma}
    \label{lem_square}
    A 2D facet-continuous polycubic chain $S_1,\ldots,S_n$ with more than $4$
    fractions and square body has a collinear segment of length $3$.
\end{lemma}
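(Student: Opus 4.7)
The plan is to assume for contradiction that no three consecutive fractions of $S_1,\ldots,S_n$ are collinear, and to derive a contradiction by analysing the parity of the side $k$ of the square body. Since the chain is facet-continuous and its fractions are distinct unit squares, non-collinearity at each interior index $i$ means the unit displacement $S_i - S_{i-1}$ is perpendicular to $S_{i+1} - S_i$; hence the sequence of $n-1$ displacement directions strictly alternates between horizontal (H) and vertical (V) moves. The $n$ unit squares have pairwise disjoint interiors, so the square body has area $n > 4$, so it is a $k \times k$ axis-aligned square with $k \geq 3$ tiled by the $S_i$.

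If $k$ is odd, then $n = k^2$ is odd. Whichever of H or V is the orientation of the first move, the consecutive pairs $(S_1,S_2),(S_3,S_4),\ldots,(S_{n-2},S_{n-1})$ are dominoes of that orientation and cover every cell except a single leftover $S_n$. Any row (respectively column) not containing $S_n$ would then be fully partitioned into $k/2$ horizontally (respectively vertically) adjacent pairs, which is impossible for $k$ odd, and there are at least $k - 1 \geq 2$ such rows or columns.

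If $k$ is even with $k \geq 4$, then (up to swapping coordinates) assume the first move is H. The H-dominoes $(S_{2i-1},S_{2i})$ for $i=1,\ldots,k^2/2$ tile the whole grid and induce a perfect matching of each row, viewed as a $k$-vertex path $P_k$. Since $P_k$ admits a unique perfect matching, the dominoes in each row occupy precisely the column pairs $\{1,2\},\{3,4\},\ldots,\{k-1,k\}$. I then form the domino graph $G_D$ whose vertices are the $k^2/2$ horizontal dominoes and whose edges join pairs containing vertically adjacent cells. A domino in column pair $\{2j-1, 2j\}$ has vertical neighbours only among dominoes of the same column pair in the adjacent rows, so $G_D$ splits into $k/2 \geq 2$ disjoint path components. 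But the V-moves $(S_{2i}, S_{2i+1})$ arrange the dominoes into a Hamiltonian path of $G_D$, forcing it to be connected --- a contradiction.

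The main obstacle is to convert the geometric hypothesis that the path always turns into rigid combinatorics; the crucial step is the uniqueness of the perfect matching of $P_k$ for even $k$, which pins down the domino layout and leaves the domino graph disconnected whenever $k \geq 4$, while the odd case is disposed of by an even simpler parity count on rows.
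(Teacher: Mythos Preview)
Your argument is correct, but it proceeds quite differently from the paper. The paper gives a short direct proof: since $s\ge 3$, no two of the four corner cells of the $s\times s$ body are facet-adjacent, so the pairs $\{S_1,S_2\}$ and $\{S_{n-1},S_n\}$ can each contain at most one corner cell; hence some corner cell is $S_i$ with $3\le i\le n-2$. Such a cell has exactly two facet-neighbours in the body (call them $A$ and $B$) and one diagonal neighbour $S_j$. By facet-continuity $\{S_{i-1},S_{i+1}\}=\{A,B\}$. If $j>i$ then $S_{i-2}$, being a facet-neighbour of $S_{i-1}$ different from $S_i$ and from $S_j$, must extend the row or column of $S_{i-1},S_i$, giving the collinear triple $S_{i-2},S_{i-1},S_i$; the case $j<i$ is symmetric.

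Your route---assuming the step directions alternate and then analysing the resulting horizontal-domino tiling---is more elaborate but perfectly sound. The odd-$k$ parity count is immediate, and in the even case your use of the uniqueness of the perfect matching of $P_k$ to pin down the domino layout, followed by the observation that the vertical-adjacency graph on these dominoes splits into $k/2\ge 2$ disjoint column-pair paths, is a clean way to contradict the Hamiltonian path forced by the V-moves. The paper's argument is shorter and more local (it only looks near one corner), whereas yours is global and structural; in return, your approach makes explicit why the case $k=2$ is genuinely exceptional and would adapt more readily to related questions about Hamiltonian grid paths that avoid straight triples.
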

\begin{proof}
    Consider the square $P=S_1\cup\ldots\cup S_n$.
    It has size $s\times s$ with $s^2=n\ge 9$, so $P$ has
    has a vertex that is not contained in $(S_1\cup S_2)\cup (S_{n-1}\cup S_n)$.
    This vertex is contained in some fraction $S_i$ with $3\le i\le n-2$. Denote by
    $S_j$ the only fraction that intersects $S_i$ by a vertex. If $j>i$ then one can
    take triple $S_{i-2},S_{i-1},S_i$ and if $j<i$ we take
    $S_i,S_{i+1},S_{i+2}$.
\end{proof}

\def\thetheorem{\ref{th_poly2}}
\begin{theorem}
    There exists some $\eps>0$, such that for any plane cubically decomposable
    curve $\gamma\colon[0,1]\to[0,1]^2$ one has $\WD_2(\gamma)\ge 4+\eps$.
\end{theorem}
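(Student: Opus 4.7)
The plan is a compactness argument followed by a structural dichotomy. Suppose, for contradiction, that no such $\eps$ exists; then there is a sequence of cubically decomposable curves $\gamma_n\colon[0,1]\to[0,1]^2$ with $\WD_2(\gamma_n)\to 4$. Each $\gamma_n$ is $(1/2)$-H\"older with a uniformly bounded constant, so by Arzel\`a--Ascoli I pass to a uniformly convergent subsequence and obtain a continuous surjective limit $\gamma_\infty\colon[0,1]\to[0,1]^2$ with $\WD_2(\gamma_\infty)\le 4$. At each fixed level $k$ of decomposition the polycubic chain of $\gamma_n$ (cubes together with their orientations) admits only finitely many combinatorial types, so by a diagonal argument I may further assume that, for every $k$, all the $\gamma_n$ share the same level-$k$ chain structure; then $\gamma_\infty$ is itself cubically decomposable. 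It now suffices to derive a contradiction from the existence of a cubically decomposable $\gamma_\infty$ with $\WD_2(\gamma_\infty)\le 4$.

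I would proceed by a dichotomy on the polycubic structure of $\gamma_\infty$. \emph{Case 1: some level of $\gamma_\infty$ has a non-facet (diagonal) junction $F\to G$.} The pair $(F,G)$ already has chain-dilation $4$, attained at the two extreme corners opposite to the shared vertex. Achieving $\WD_2(\gamma_\infty)=4$ would force the curve to visit the first of these corners at the very start of $F$ and the second at the very end of $G$; this pins down the entrance of $F$ and the exit of $G$ at specific vertices. The last sub-fraction of $F$ touching the common vertex and the first sub-fraction of $G$ touching it are again diagonally adjacent, so the same configuration propagates to every finer level. A short geometric analysis then shows that these consistency constraints cannot all hold simultaneously, forcing $\WD_2(\gamma_\infty)>4$. \emph{Case 2: every level of $\gamma_\infty$ is facet-continuous.} For any sufficiently refined level, Lemma~\ref{lem_square} yields a collinear triple $S_i,S_{i+1},S_{i+2}$. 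Iterating the subdivision inside the middle fraction $S_{i+1}$ and applying pigeonhole to the finitely many possible entrance/exit-facet pairs, I extract a sub-fraction whose entrance (from its predecessor) and exit (to its successor) lie on opposite sides of its cubic image. Applying Theorem~B to this sub-fraction (rescaled to $[0,1]^2$) gives $\WD_2(\gamma_\infty)>4$, the desired contradiction.

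The main obstacle is Case~2: verifying that iterated facet-continuous subdivision is genuinely forced to produce an opposite-sides entrance/exit configuration requires tracking how the entry/exit facets of a fraction constrain those of its sub-fractions along the chain, and using Lemma~\ref{lem_square} repeatedly rather than once. The quantitative jump from ``$>4$'' (all that Theorem~B and the rigidity analysis in Case~1 directly yield) to ``$\ge 4+\eps$'' is not proved inside either case; it comes for free from the initial compactness reduction, which converts any sequence of near-$4$ dilations into an actual cubically decomposable curve with $\WD_2\le 4$, so that the two cases need only exclude exact equality.
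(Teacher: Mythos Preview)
Your dichotomy is the right one, but both branches and the compactness step are considerably harder than they need to be, and the compactness step as written has a genuine gap.

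\emph{Compactness.} You assert that after a diagonal argument the limit $\gamma_\infty$ is again cubically decomposable. But different cubically decomposable curves may have arbitrarily large first subdivision scale $s_1$ (nothing in $\WD_2\le 5$ bounds it), so ``the level-$k$ chain'' does not live in a fixed finite set across the sequence, and the limit need not admit any cubic decomposition at all. The paper avoids this by proving $\WD_2(\gamma)>4$ for each individual cubically decomposable $\gamma$ first, and only then invoking compactness --- and on a much simpler class, namely surjective curves $[0,1]\to[0,1]^2$ with entrance and exit on opposite sides (closed under uniform limits by Arzel\`a--Ascoli), where Theorem~B provides the strict inequality and lower semicontinuity of $\WD_2$ upgrades it to a uniform $\eps$.

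\emph{Case 1 (diagonal junction).} No rigidity or propagation analysis is needed. Take the last two sub-fractions $F_2,F_1$ of $F$ and the first two $G_1,G_2$ of $G$; a one-line check shows $\diam(F_2\cup F_1\cup G_1\cup G_2)\ge\sqrt{18}$, so $\WD_2(\gamma)\ge 18/4=4.5$ outright. Your proposed rigidity argument (``these constraints cannot all hold simultaneously'') is left as an assertion and is in any case superfluous.

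\emph{Case 2 (facet-continuous).} You do not need to iterate or pigeonhole. The collinear triple $S_{i-1},S_i,S_{i+1}$ produced by Lemma~\ref{lem_square} already places $S_{i-1}$ and $S_{i+1}$ on opposite facets of $S_i$; by continuity the entrance of $S_i$ lies on $S_{i-1}\cap S_i$ and the exit on $S_i\cap S_{i+1}$, i.e.\ on opposite sides. Theorem~B then applies directly to the (rescaled) middle fraction.
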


\begin{proof}[Proof of Theorem~\ref{th_poly2}]
    If all junctions of a cubically decomposable curve $\gamma$ are facet, then one can take a decomposition into $n\ge 9$
    squares and apply Lemma~\ref{lem_square} to find a collinear segment
    $S_{i-1},S_i,S_{i+1}$. Then the middle fraction $S_i$ has its entrance and
    exit on the opposite sides on the square. We apply Theorem B to obtain that
    $\WD(\gamma)>4$.

    Suppose that there is a diagonal junction $F\to G$. Consider two last sub-fractions
    $F_2,F_1$ of $F$ and two first sub-fractions $G_1,G_2$ of $G$ (so, $F_1$
    meets $G_1$). Then
    $$
    \diam(F_2\cup F_1\cup G_1\cup G_2)\ge\min\{|(3,3)|,|(4,2)|\}=\sqrt{18},
    $$
    which gives $\WD\ge 18/4=4.5$.

    We have proved that $\WD(\gamma)>4$. The bound $\WD(\gamma)\ge4+\eps$ with
    some absolute $\eps>0$ follows from the compactness argument.
\end{proof}

\paragraph{Dimension 3.}

\begin{lemma}
    \label{lem_4empty}
    A 3D facet-continuous $4$-cubic chain with empty intersection of its fractions has dilation at
    least $7\sqrt{14}/2=13.0958\ldots$.
\end{lemma}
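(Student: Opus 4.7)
The strategy is to reduce the dilation bound to a diameter bound and then enumerate the shapes of the chain. Since the four unit cubes are distinct and lattice-aligned, the body $B=S_1\cup\cdots\cup S_4$ has volume exactly $4$, so $\dil_2(B)=\diam_2(B)^3/4$; because $(\sqrt{14})^3/4=7\sqrt{14}/2$, it suffices to show that $\diam_2(B)\ge\sqrt{14}$.

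Write $S_{i+1}=S_i+\delta_i$ with $\delta_i\in\{\pm e_1,\pm e_2,\pm e_3\}$ (facet continuity) and set $v=\delta_1+\delta_2+\delta_3$. Taking a pair of opposite corners of $S_1$ and $S_4$ gives
$$
\diam_2(B)\ge\sqrt{(|v_1|+1)^2+(|v_2|+1)^2+(|v_3|+1)^2},
$$
and $|v|_1\in\{1,3\}$ by parity.

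I would then enumerate the options for $v$ and use the empty-intersection hypothesis to kill the small-diameter ones. If $|v|_1=3$, up to isometry there are three shapes: \emph{straight} (one axis used three times with the same sign, giving the bound $\sqrt{18}$), \emph{L-shape} (one axis used twice, another once, giving $\sqrt{14}$), and \emph{helix} (three distinct axes, each used once, giving only $\sqrt{12}$). The helix must be excluded because each axis $j$ then carries exactly one motion, which splits the chain into two consecutive blocks whose $j$-coordinate intervals meet at a single value; consequently all four cubes share the common vertex whose $j$-th coordinate is $\max(0,v_j)$, contradicting the hypothesis. If $|v|_1=1$, distinctness of the cubes (forbidding $\delta_{i+1}=-\delta_i$ and $S_4\in\{S_1,S_2\}$) forces the $2\times 2\times 1$ U-shape, in which the four cubes share a full edge parallel to the unused axis; again excluded. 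Only the straight and L-shape cases survive, and in each of them the diameter is at least $\sqrt{14}$.

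The main obstacle is the enumeration for $|v|_1=1$: after normalizing $\delta_1$ one must list all valid triples $(\delta_1,\delta_2,\delta_3)$ compatible with distinctness and recognize each as the U-shape. The helix exclusion is a single geometric observation, but it has to be stated uniformly for all sign patterns $v\in\{\pm 1\}^3$ rather than only for $v=(1,1,1)$.
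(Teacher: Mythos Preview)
Your argument is correct and matches the paper's approach: the paper condenses the case analysis by first noting that an empty intersection forces the chain to be coplanar (which is exactly your helix exclusion), and then lists the two surviving diameter vectors $(3,2,1)$ and $(4,1,1)$, i.e.\ your L-shape and straight cases after the U-shape is discarded by the same non-empty-intersection check. Your organization by $|v|_1$ is more explicit but amounts to the same enumeration.
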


\begin{proof}
    Note that if the intersection of fractions of our chain $Q_1,\ldots,Q_4$
    is empty, then it is coplanar. In appropriate coordinates, the vector
    between two farthest points of $Q_1$ and $Q_4$ (the ``diameter vector'')
    equals $(3,2,1)$ or $(4,1,1)$ and 
    the diameter of the body $Q_1\cup\ldots Q_4$ at least $|(3,2,1)|=\sqrt{14}$, which
    gives the dilation bound $\ge 14\sqrt{14}/4$.
\end{proof}

\begin{lemma}
    \label{lem_5empty}
    If a polycubic chain with cubic body has more than $8$ fractions then it
    contains a $5$-cubic segment with empty intersection.
\end{lemma}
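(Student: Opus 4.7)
The plan is to argue by contradiction: suppose every $5$-cubic segment of the chain has non-empty intersection. The first key observation would be that five distinct unit cubes on the integer lattice cannot share a lattice edge (an edge belongs to exactly four cubes), so the intersection of any such five cubes has dimension at most~$0$. Thus for each $j=1,\ldots,n-4$ the $5$-segment intersection collapses to a single grid vertex $v_j$. Since boundary vertices of the body are incident to at most $4<5$ body-cubes, every $v_j$ must be an interior vertex. A short check---two lattice vertices lie in $4$ common cubes only when they coincide or differ by a single standard basis vector---would show that $(v_j)$ is a walk with unit-or-zero steps.

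The structural heart of the argument is a \emph{plateau bound}: if $v_j=v$ throughout some interval $[a,b]$, then $Q_a,Q_{a+1},\ldots,Q_{b+4}$ are $b-a+5$ distinct cubes, all belonging to the $2\times 2\times 2$ block $B_v$; since $|B_v|=8$, the plateau length satisfies $b-a+1\le 4$. Next I would exploit the corners of the body. When $n=s^3>8$, we have $s\ge 3$, and each body-corner $\alpha\in\{0,s-1\}^3$ contains exactly one interior vertex $v_\alpha\in\{1,s-1\}^3$; moreover, since $1\ne s-1$, the $8$ vertices $v_\alpha$ are pairwise distinct. So if the chain visits corner $\alpha$ at time $i_\alpha$, we are forced to have $v_j=v_\alpha$ for every $j\in[\max(1,i_\alpha-4),\min(n-4,i_\alpha)]$---an interval of length $\min(5,i_\alpha,n-i_\alpha+1)$.

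The plateau bound of~$4$ would then immediately rule out any $i_\alpha\in[5,n-4]$ (which would demand a plateau of length~$5$), so each corner visit must occur within the boundary slots $[1,4]\cup[n-3,n]$. A final pigeonhole closes the argument: two distinct corners with $i_\alpha,i_{\alpha'}\in[1,4]$ would both force $v_1=v_\alpha=v_{\alpha'}$, contradicting $v_\alpha\ne v_{\alpha'}$; and symmetrically at most one corner fits in $[n-3,n]$. So at most $2$ of the $8$ body corners can be accommodated---the desired contradiction.

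The main obstacle I anticipate is isolating the right invariant at the outset, namely the walk $(v_j)$ of common vertices together with the plateau bound $b-a+1\le 4$. Once that is in hand, the corner-counting step is short and is uniform in $s\ge 3$, requiring only elementary geometry of the integer lattice and no further case analysis on the size of the body.
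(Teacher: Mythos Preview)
Your proof is correct but follows a genuinely different route from the paper's. The paper argues directly and briefly: it picks the fraction $Q_i$ containing the fourth body-vertex in traversal order, notes that $7\le i\le n-8$ (implicitly because distinct corner cubes of an $s\times s\times s$ body with $s\ge 3$ lie at $\ell_\infty$-distance $\ge s-1\ge 2$, so their visit times are spaced by at least $2$), and then uses that a corner cube has only seven neighbours inside $P$: if $k$ and $l$ denote the maximal consecutive runs before and after $Q_i$ that still meet $Q_i$, then $k+l\le 7$, hence $\min(k,l)\le 3$, and stepping one cube past the shorter run yields a segment of length at most $5$ with empty intersection. Your approach instead builds the sequence $(v_j)$ of common vertices, extracts the plateau bound $b-a+1\le 4$, and plays all eight corners against the two boundary windows $[1,4]$ and $[n-3,n]$. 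This is more elaborate, but it buys something: nowhere do you use that consecutive $Q_j$ are adjacent, so you in fact prove that \emph{any} enumeration of the $s^3>8$ unit subcubes of an $s$-cube contains five consecutive cubes with empty intersection. (Your observation that $v_{j+1}-v_j\in\{0,\pm e_i\}$ is correct but is not actually needed for the remainder of the argument.)
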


\begin{proof}
    Let $Q_1,\ldots,Q_n$ be a chain with a cubic body $P$. If $n>8$, then
    $P$ is a $s\times s\times s$ cube with $s\ge 3$. Any vertex of $P$ belongs
    to some fraction; let $Q_i$ be the fraction that contains the $4$-th vertex
    of P (in the chain traverse order). Then $7\le i\le n-8$.

    Let $k$ be the maximal number such that $Q_{i-1},Q_{i-2},\ldots,Q_{i-k}$
    intersect $Q_i$, and $l$ be the maximal number such that
    $Q_{i+1},\ldots,Q_{i+l}$ intersect $Q_i$. There are $7$ fractions
    intersecting $Q_i$, so $\min(k,l)\le 3$. Let, e.g., $l\le 3$. Then the chain
    $Q_i,\ldots,Q_{i+l+1}$ has at most $5$ fractions and empty intersection.
\end{proof}

\begin{statement}
    \label{stm_hyper3d}
    Any facet-continuous $3$-dimensional polycubic chain with cubic body and more
    than $8$ fractions has the dilation at least $7\sqrt{14}/2$.
\end{statement}

\begin{proof}
    If our chain contains a $4$-cubic segment with empty intersection, then 
    Lemma~\ref{lem_4empty} provides the required bound.

    Otherwise, consider a $5$-cubic chain $Q_1,\ldots,Q_5$ with empty intersection, given by
    Lemma~\ref{lem_5empty}. Fractions $Q_2,Q_3,Q_4$ must intersect by an edge.
    $Q_1$ and $Q_5$ must intersect that edge (otherwise a $4$-cubic chain with
    empty intersection appears), but they cannot contain that edge. Instead of that, they contain
    opposite endpoints of it. It follows that the diameter of the body
    $Q_1\cup\ldots\cup Q_5$ is at least $\sqrt{3^2+2^2+2^2}$ and the dilation is
    at least $17^{3/2}/5>14$.
\end{proof}

Now we are ready to prove lower bounds for the dilation of 3D cubically decomposable curves.

\def\thetheorem{\ref{th_poly3}}
\begin{theorem}
    Let $\gamma\colon[0,1]\to[0,1]^3$ be a cubically decomposable curve. Then
    $\WD_2(\gamma)\ge 7\sqrt{14}/2>13.09$. If $\gamma$ has a non-facet junction, then
    $\WD_2(\gamma)\ge 17.17$.
\end{theorem}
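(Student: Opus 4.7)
The plan is to split on whether $\gamma$ has any non-facet junction. If every junction of $\gamma$ at every level is a facet junction, then at any sufficiently deep decomposition level the sequence of order-$k$ fractions is a facet-continuous polycubic chain tiling $[0,1]^3$; since $\gamma$ is cubically decomposable I can pick the level so that the chain has more than $8$ fractions, and Statement~\ref{stm_hyper3d} yields $\WD_2(\gamma)\ge 7\sqrt{14}/2$. Otherwise I aim for the sharper $\WD_2(\gamma)\ge 17.17$, which also implies the first bound. So both halves of the theorem reduce to the non-facet case.

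Take a non-facet junction $F\to G$ and rescale so that $F$ and $G$ are unit cubes traversed in unit time each. In $\R^3$ the intersection $F\cap G$ is either a single vertex or an edge. In the vertex subcase, put $F=[0,1]^3$ and $G=[-1,0]^3$; the opposite corners $(1,1,1)\in F$ and $(-1,-1,-1)\in G$ are at Euclidean distance $2\sqrt{3}$ and are traversed with time gap at most $2$, so the cube-to-linear ratio is at least $(2\sqrt{3})^3/2=12\sqrt{3}\approx 20.78>17.17$, closing this subcase immediately.

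The edge subcase is the substantive part. Place $F=[0,1]^3$ and $G=[0,1]\times[-1,0]\times[-1,0]$ with the shared edge on the $x$-axis, and pass to sub-fractions $F_2,F_1$ (last two of $F$) and $G_1,G_2$ (first two of $G$), rescaled to unit cubes. I would run a case analysis on the positions of $F_2$ and $G_2$ relative to the two-cube strip along the shared edge. Whenever either of $F_2$ or $G_2$ is displaced transversely to the edge, one extracts a short chain whose diameter vector is of type $(3,2,1)$ or $(3,2,2)$, producing a pair (such as $(1,1,1)\in F_1$ with a far corner of a transverse $G_2$) with cube-to-linear ratio at least $14\sqrt{14}/3\approx 17.46$ or $17\sqrt{17}/4\approx 17.52$, comfortably above $17.17$. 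In the remaining case where both $F_2$ and $G_2$ lie in the strip, the sub-junction $F_1\to G_1$ is again non-facet on the same edge, and I recurse to deeper subdivisions. Either a vertex sub-junction eventually appears (handled by the previous paragraph) or only aligned edge sub-junctions keep appearing; surjectivity of $\gamma$ rules out the latter at all depths, since the strip has fixed relative volume strictly smaller than $F\cup G$, so at some finite level a transverse displacement surfaces and the bound is achieved.

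The main obstacle is this edge subcase. The naive $4$-cube chain $F_2F_1G_1G_2$ in a strip-aligned configuration has diameter only $2\sqrt{3}$, yielding a ratio of about $8\sqrt{3}\approx 13.86$, well short of $17.17$. Closing the gap requires both the transverse-displacement computation in every non-aligned configuration and the recursion-with-surjectivity argument that aligned strips cannot persist forever; the delicate combinatorial piece is coordinating the time-gap estimates across several candidate pairs (such as $F_1$--$G_2$, $F_2$--$G_1$, and $F_3$--$G_1$) and sub-orientations so that at least one pair clears the $17.17$ threshold in every admissible configuration.
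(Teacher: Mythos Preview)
Your reduction to the non-facet case, the vertex subcase, and the transverse-displacement computation are all correct; in fact, your three-cube estimate $\diam(F_1\cup G_2)\ge\sqrt{14}$, giving ratio $\ge 14\sqrt{14}/3$, is a clean way to dispose of every configuration where $G_2$ (or $F_2$) leaves the strip along the shared edge.

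The gap is in the strip-aligned case. Your claim that ``surjectivity of $\gamma$ rules out'' perpetual strip alignment does not hold. At each level you only examine the \emph{first two} sub-fractions of $G_1^{(k)}$ (and the last two of $F_1^{(k)}$); surjectivity forces the remaining $s_k^3-2$ sub-fractions to fill out the cube, but places no constraint whatsoever on where the second sub-fraction sits. A curve whose first step at every level is along the edge direction is perfectly compatible with surjectivity, so your recursion need never terminate. The volume heuristic (``the strip has fixed relative volume strictly smaller than $F\cup G$'') is about the whole strip, not about two specific cubes, and does not give the contradiction you need.

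The paper closes this case not by recursion but by passing to a third sub-fraction $G_3$. If $G_2$ is aligned (say $G_2=G_1+(0,0,1)$ with the edge along $z$), one checks that any facet-continuous choice of $G_3$ already gives $\diam(F_1\cup G_3)\ge|(3,2,2)|$, hence $\WD\ge 17\sqrt{17}/4$. The genuinely hard residual configuration is $G_3=G_1+(1,0,0)$ (an edge junction $G_2\to G_3$), where no single pair of corners suffices; there the paper picks four points $A_0\in F_1$, $A_1\in G_1$, $A_2\in G_2$, $A_3\in G_3$ and telescopes
\[
\sum_{i=0}^{2}|A_{i+1}-A_i|^3 \le \WD(\gamma)\,(t_3-t_0)\le 4\,\WD(\gamma),
\]
with the left side equal to $27+6\sqrt{6}+27$, yielding $\WD(\gamma)\ge\tfrac14(54+6\sqrt6)>17.17$. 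That multi-point summation is the missing idea in your approach.
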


\begin{proof}[Proof of Theorem~\ref{th_poly3}]
    If all junctions of a cubically-decomposable curve $\gamma$ are facet, then we apply
    Statement~\ref{stm_hyper3d}.

    Suppose that $\gamma$ has a non-facet junction $F\to G$. If $F$ and $G$
    intersect by a vertex, then we have the dilation bound:
    $\WD(\gamma)\ge\diam(F\cup G)^3/2\ge 12^{3/2}/2>20$.
    So we may suppose that $\gamma$ has no such junctions.

    Let $F$ and $G$ intersect by an edge. Consider sub-fractions
    $F_3,F_2,F_1\subset F$ and $G_1,G_2,G_3\subset G$, such that $F_1$ meets
    $G_1$. 
    Let us choose coordinates $x,y,z$, such that the edge $F\cap G$ is
    parallel to the third (vertical) axis, the projections of $F$ and $G$
    onto horizontal plane $\{z=0\}$
    lie in the quadrants $\{x,y\le 0\}$ and $\{x,y\ge 0\}$, correspondingly. For definiteness,
    let $G_1=[0,1]^3$ and $F_1=G_1 + (-1,-1,0)$.

    Consider the fraction $G_2$.
    If $G_1$ and $G_2$ are on the same height, then
    $$
    \diam(F_2\cup F_1\cup G_1\cup G_2)\ge
    \min\{|(3,2,2)|,|(3,3,1)|,|(4,2,1)|\}=\sqrt{17},
    $$
    hence $\WD(\gamma)\ge 17\sqrt{17}/4>17.5$.
    If $G_1$ and $G_2$ intersect by an edge, then the diameter of $F_1\cup G_2$
    is already too large. So, we may assume that $G_2$ is above:
    $G_2=G_1+(0,0,1)$.

    Consider the fraction $G_3$. If the junction $G_2\to G_3$ is facet, then one
    can show that $\diam(F_1\cup G_3)\ge|(3,2,2)|$, which is sufficient for us.
    It remains one (up to isometry) ``difficult'' case, when $G_3$ and $G_2$
    intersect by an edge: $G_3 = G_1 + (1,0,0)$.

\begin{figure}[!h]
    \begin{center}
        \includegraphics[width=10cm]{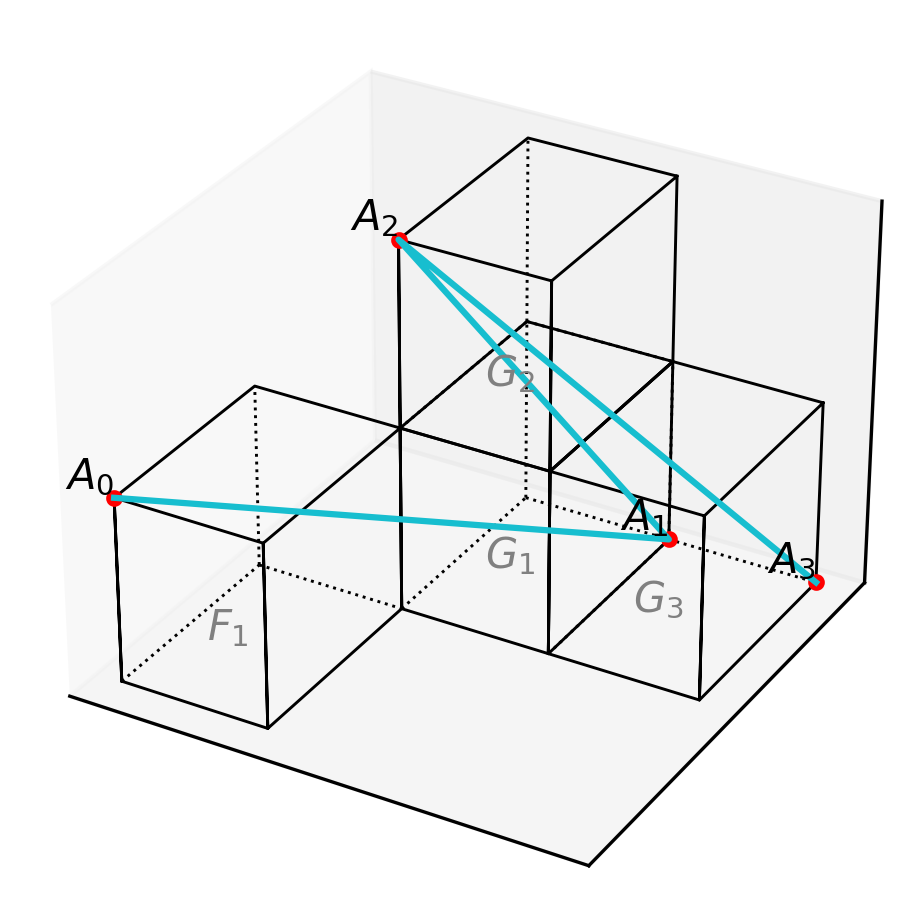}
    \end{center}
    \caption{Proof of Theorem~\ref{th_poly3}, ``difficult'' case.}
    \label{fig_difficult}
\end{figure}

    Consider the ``difficult'' case, illustrated in Figure~\ref{fig_difficult}. Take points $A_0=(-1,-1,1)\in F_1$,
    $A_1=(1,1,0)\in G_1$, $A_2=(0,0,2)\in G_2$, $A_3=(2,1,0)\in G_3$. There
    exists some moments $t_0<t_1<t_2<t_3$, such that $\gamma(t_i)=A_i$. Then by
    definition of $\WD$, we have
    $$
    \sum_{i=0}^2 |A_{i+1}-A_i|^3 \le \WD(\gamma)\sum_{i=0}^2 |t_{i+1}-t_i| \le
    4\WD(\gamma),
    $$
    $$
    \WD(\gamma) \ge \frac14\sum_{i=0}^2 |A_{i+1}-A_i|^3 =
    \frac14(27+6\sqrt{6}+27)>17.17.
    $$
\end{proof}

\paragraph{Dimension 4.}

\begin{lemma}
    \label{lem_7chain}
    If a $4D$ facet-continuous $7$-cubic chain $Q_1,\ldots,Q_7$ has a collinear subchain $Q_3,Q_4,Q_5$,
    then the dilation of $Q_1,\ldots,Q_7$ is more than $64$.
\end{lemma}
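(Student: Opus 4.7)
The plan is to exhibit a short sub-segment of the chain---of length four or five---whose body $B$ already satisfies $\diam(B)^4>64\cdot\vol(B)$. Two arithmetic thresholds drive the argument: a four-cube body with $\diam^2\ge 19$ has dilation $\ge 361/4>90$, and a five-cube body with $\diam^2\ge 18$ has dilation $\ge 324/5>64$. First I normalize coordinates so that $Q_3=[-1,0]\times[0,1]^3$, $Q_4=[0,1]^4$, $Q_5=[1,2]\times[0,1]^3$, and I write $u=Q_2-Q_3$, $v=Q_6-Q_5$, $u'=Q_1-Q_2$, $v'=Q_7-Q_6$. Facet continuity makes each of these a signed standard basis vector; adjacent-cube distinctness forces $u\ne e_1$, $v\ne -e_1$, $u'\ne -u$, $v'\ne -v$; and global distinctness of $\{Q_1,\ldots,Q_7\}$ supplies the remaining combinatorial constraints.

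The case split runs as follows. If $u=-e_1$ or $v=e_1$, then the chain contains four collinear unit cubes and the four-cube bound is immediate from $\diam^2\ge 16+3=19$. Otherwise $u,v\in\{\pm e_j:j\ge 2\}$, and by reflecting coordinates $2,3,4$ I take $u=e_2$. A direct inspection of the farthest pair of vertices in $Q_2\cup Q_6$ yields $\diam^2\ge 20$ when $v=-e_2$ and $\diam^2\ge 18$ when $v\in\{\pm e_3,\pm e_4\}$, so in each of these subcases the five-cube segment $(Q_2,\ldots,Q_6)$ already meets the bound.

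What remains is the symmetric ``fold-back'' configuration $u=v=e_2$, where $Q_2\cup\cdots\cup Q_6$ is flat with $\diam^2=15$ and a certificate has to be pulled from an end cube. Applying the previous analysis to $(Q_1,\ldots,Q_5)$ with displacement $u'$ rules out every value of $u'$ except $u'=e_1$, and that choice pins $Q_1=[0,1]\times[1,2]\times[0,1]^2$. I then run the mirror analysis on $(Q_3,\ldots,Q_7)$: each admissible $v'$ except $v'=-e_1$ yields $\diam^2\ge 18$, while $v'=-e_1$ forces $Q_7=[0,1]\times[1,2]\times[0,1]^2=Q_1$, contradicting the distinctness of cubes in the chain. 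The main obstacle is exactly this closing step: the certificate must be pulled from either $Q_1$ or $Q_7$, and what makes the argument go through is the impossibility of both end cubes folding inward along the $x_1$-axis simultaneously.
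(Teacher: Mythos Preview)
Your argument is correct and is essentially the paper's proof: both split on the displacement vectors and close via the thresholds $\diam^2\ge 19$ on a 4-segment and $\diam^2\ge 18$ on a 5-segment, with the paper phrasing the split via coplanarity (and finishing by asserting that some 5-segment's planar projection escapes a $2\times3$ box) while you track coordinates directly and make the final $Q_1=Q_7$ collision explicit---a point the paper leaves implicit. One small note: ``applying the previous analysis to $(Q_1,\ldots,Q_5)$'' is a mild abuse, since that 5-chain has its collinear triple $Q_3,Q_4,Q_5$ at an end rather than in the middle, so the case check on $u'$ is analogous but not literally the same as the one on $v$; the claim that only $u'=e_1$ survives is nonetheless correct and the verification is routine.
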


\begin{proof}
    Let us reduce the problem to a planar situation. Suppose that 5-chain
    $Q_2,\ldots,Q_6$ is not coplanar. Then the directions $Q_2Q_3$ and $Q_5Q_6$
    are orthogonal to each other and to the direction of $Q_3,Q_4,Q_5$. Then, in
    appropriate coordinates, the vector between two farthest points of $Q_2$ and
    $Q_6$ (the ``diameter vector'') equals $(3,2,2,1)$ and 
    the dilation is $18^2/5=64.8$.

    So, $Q_2,\ldots,Q_6$ is coplanar to some plane $\pi$. If one of the
    directions $Q_2Q_3$ or $Q_5Q_6$ is parallel to $Q_3,Q_4,Q_5$, then be have a
    collinear 4-chain. It has dilation $19^2/4>90$.

    If $Q_1,\ldots,Q_5$ or $Q_3,\ldots,Q_7$ are not coplanar to $\pi$, then the
    proof goes the same way as for $Q_2,\ldots,Q_7$. So, we may assume that the
    whole 7-chain is coplanar to $\pi$. The projection onto $\pi$ of one of the
    $5$-chains $Q_1\ldots Q_5$, $Q_2\ldots Q_6$, $Q_3\ldots Q_7$ does not fit
    into a $2\times 3$ rectangle and it gives a diameter vector $(3,3,1,1)$ or
    $(4,2,1,1)$ and the dilation at least $20^2/5=80$.
\end{proof}

\begin{statement}
    \label{stm_hyper4d}
    The dilation of 4D facet-continuous polycubic chain with cubic body and
    more than one fraction is at least $42.25$.
\end{statement}

\begin{proof}
    Denote our chain as $Q_1,\ldots,Q_n$. If there is a collinear 3-cubic chain
    $Q_i,Q_{i+1},Q_{i+2}$, $3\le i\le n-4$, then by Lemma~\ref{lem_7chain}
    the dilation at least $64$. Assume that such 3-cubic chains are not
    collinear. Let us prove that there is a 4-cubic chain $Q_i,Q_{i+1},Q_{i+2},Q_{i+3}$ that is
    not coplanar. It this is not the case, then the chain
    $Q_2,\ldots,Q_{n-1}$ is coplanar, because all adjacent 4-chains in it are
    coplanar to the same plane (defined by their three common fractions).
    Obviously, our chain cannot have cubic body in that case.

    If $Q_i,Q_{i+1},Q_{i+2},Q_{i+3}$ is not coplanar, then $Q_{i+3}$ is a
    translation of $Q_i$ by the vector $(1,1,1,0)$ (in appropriate coordinates).
    So, $\diam(Q_i\cup\ldots\cup Q_{i+3})\ge \sqrt{13}$ and we have the
    dilation bound $13^2/4=42.25$.
\end{proof}

\def\thetheorem{\ref{th_poly4}}
\begin{theorem}
    Let $\gamma\colon[0,1]\to[0,1]^4$ be a cubically decomposable curve. Then
    $\WD_2(\gamma)\ge 42.25$. If $\gamma$ has a non-facet junction, then
    $\WD_2(\gamma)\ge 62.25$.
\end{theorem}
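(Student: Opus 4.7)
The plan is to mirror the structure of the proof of Theorem~\ref{th_poly3}. For the first bound, apply Statement~\ref{stm_hyper4d} directly: since $\gamma$ is cubically decomposable, we can decompose it into $n \ge 2$ first-order cubic fractions, producing a polycubic chain with cubic body $[0,1]^4$, and the statement gives $\WD_2(\gamma) \ge 42.25$.

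For the second bound, let $F \to G$ be the hypothesized non-facet junction and let $k \in \{0,1,2\}$ denote the dimension of the shared face $F \cap G$. In suitable coordinates the ``diameter vector'' of $F \cup G$ has $4-k$ entries of absolute value $2$ and $k$ entries of absolute value $1$, so $\diam(F \cup G) = \sqrt{16 - 3k}$. Since $\vol(F \cup G) = 2$, the crude diameter estimate yields
\begin{equation*}
\WD_2(\gamma) \ge \frac{(16-3k)^2}{2},
\end{equation*}
which equals $128$ for a vertex junction ($k=0$) and $84.5$ for an edge junction ($k=1$). Both exceed the target $62.25$, so these sub-cases are settled immediately.

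Only the $2$-face case ($k = 2$), where the crude bound gives merely $50$, requires real work. I would place $F_1 \subset F$ and $G_1 \subset G$ as unit sub-fractions meeting across the shared $2$-face, in coordinates so that $F_1 = [-1,0]^2 \times [0,1]^2$ and $G_1 = [0,1]^4$. Following the strategy of Theorem~\ref{th_poly3}, I would enumerate the possible positions of the subsequent sub-fractions $G_2, G_3 \subset G$ and the preceding $F_2, F_3 \subset F$, reduced by the obvious symmetries swapping the two perpendicular and the two parallel directions. In most arrangements some enlarged union $F_i \cup \dots \cup G_j$ already has $\diam^4 / T \ge 62.25$, where $T$ is the maximum time span, so the target follows. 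In the residual ``tight'' arrangements, I would invoke the cube-to-linear summation trick used for the hard case of Theorem~\ref{th_poly3}: choose vertices $A_0, A_1, A_2, A_3$ (or a longer chain) in consecutive sub-fractions and use
\begin{equation*}
\WD_2(\gamma) \cdot T \ge \sum_{i} |A_{i+1} - A_i|^4,
\end{equation*}
picking the chain so that the right-hand side is at least $62.25 \cdot T$.

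The main obstacle is the case analysis for the $2$-face junction. Compared with the $3$D edge junction in Theorem~\ref{th_poly3}, the perpendicular complement of $F \cap G$ is again two-dimensional, but the shared subspace is now a full plane rather than a line, so successive sub-fractions have extra degrees of freedom to drift ``along'' the shared $2$-face. Identifying the truly tight configurations (where neither the diameter of $F_1 \cup G_3$ nor that of $F_2 \cup G_2$ is sufficient, because $G_2, G_3$ and $F_2, F_3$ cancel out in one of the parallel directions) and producing the vertex chain that certifies the sum bound in each is the technical core; the routine configurations dispatch quickly by the diameter estimate.
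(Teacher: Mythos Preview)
Your plan matches the paper's proof almost exactly: the same crude diameter bounds dispose of the vertex and edge junctions (with the same numerical values $128$ and $84.5$), and the $2$-face case is handled by precisely the sub-fraction case analysis you describe, with the residual ``difficult'' configuration settled by the vertex-chain summation $\sum |A_{i+1}-A_i|^4 \le \WD\cdot T$ (the paper's chain has $T=4$ and sum $10^2+7^2+10^2=249$, giving exactly $62.25$).

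One small slip: you cannot apply Statement~\ref{stm_hyper4d} ``directly'' for the first bound, because that statement assumes the chain is \emph{facet-continuous}. The paper (like Theorem~\ref{th_poly3}) splits on whether all junctions are facet: if so, Statement~\ref{stm_hyper4d} gives $\ge 42.25$; if not, the second part of the theorem gives $\ge 62.25 > 42.25$. This is a one-line fix, but as written your first paragraph invokes a hypothesis you haven't verified.
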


\begin{proof}[Proof of Theorem~\ref{th_poly4}]
    If a cubically-decomposable curve $\gamma$ has only facet junctions, then we apply
    Statement~\ref{stm_hyper4d}.

    Suppose that $\gamma$ has a non-facet junction $F\to G$. If $F$ and $G$
    intersect by an edge, then we have the dilation bound:
    $$
    \WD(\gamma)\ge\frac12\diam(F\cup G)^4 = |(2,2,2,1)|^4/2 = 84.5
    $$
    and the proof is done. If $F$ and $G$ intersect by a vertex, there bound is even stronger.

    Let $F$ and $G$ intersect by a 2-dimensional face. Consider sub-fractions
    $F_3,F_2,F_1\subset F$ and $G_1,G_2,G_3\subset G$, such that $F_1$ meets
    $G_1$. 
    Let us choose coordinates $x,y,z,w$, such that $F\cap G$ is
    parallel to the third (vertical) axis $Oz$ and the fourth axis $Ow$, the projections of $F$ and $G$
    onto horizontal plane $\{z=w=0\}$
    lie in the quadrants $\{x,y\le 0\}$ and $\{x,y\ge 0\}$. For definiteness,
    let $G_1=[0,1]^4$ and $F_1=G_1 + (-1,-1,0,0)$.

    If the projections of $G_1$ and $G_2$ onto $Oxy$ do not coincide, then
    $\diam(F_1\cup G_2)\ge|(3,2,1,1)|=\sqrt{15}$, which gives the dilation bound
    $15^2/3=75$. So, $G_2$ may differ from $G_1$ only in the coordinates $z$ and $w$.
    If it differs in both coordinates, e.g., $G_2=G_1+(0,0,1,1)$, then we
    have the bound
    $\WD\ge \diam(F_1\cup G_2)^4/3\ge|(2,2,2,2)|^4/3=85\frac13$. So, we may
    assume that $G_2$ differs in $3$-rd coordinate: $G_2=G_1+(0,0,1,0)$.
    Analogously, $F_2$ differs from $F_1$ in either $3$-rd or $4$-th coordinate.
    Suppose, $F_2$ differs from $F_1$ in the $4$-th coordinate, say,
    $F_2=F_1+(0,0,0,1)$~--- then $\WD\ge\diam(F_2\cup
    G_2)^4/4\ge|(2,2,2,2)|^4/4=64$. So, $F_2=F_1+(0,0,\pm 1,0)$. The case ``-1''
    is impossible, since then $\diam(F_2\cup G_2)\ge|(3,2,2,1)|=\sqrt{18}$. 
    We obtain that $F_2=F_1+(0,0,1,0)$.

    Let us summarize the configuration that we have:
    $G_1=[0,1]^4$, $G_2=G_1+(0,0,1,0)$,
    $F_1=G_1+(-1,-1,0,0)$, $F_2=G_1+(-1,-1,1,0)$.

    Now we have to consider $G_3$ and the translation vector $\delta\colon
    G_3=G_2+\delta$. Note that $\delta_1,\delta_2\ge0$.
    
    The first case: $\delta_3=1$: we have
    $\diam(F_1\cup G_3)\ge|(3,2,2,1)|=\sqrt{18}$, which gives $\WD\ge
    18^2/4=81$.
    
    The second case: $\delta_3=0$ (i.e., $G_3$ and $G_2$ are on the same
    ``height''). As $\delta\ne\mathbf{0}$, we have $\delta_1=1$ or $\delta_2=1$
    or $\delta_4=\pm 1$. In any case, we have
    $$
    \diam(F_1\cup G_3)\ge \min\{|(3,2,2,1)|,|(2,2,2,2)|\}=4,\quad \WD\ge
    4^4/4=64.
    $$

    The last case: $\delta_3=-1$. If $\delta_1=1$ or $\delta_2=1$, then
    $\diam(F_2\cup G_3)\ge|(3,2,2,1)|$, $\WD\ge18^2/5=64.8$ and the proof is
    done. So we may assume that $\delta_1=\delta_2=0$. As $G_3\ne G_1$, we have $\delta_4\ne0$; so, w.l.o.g. we may
    assume that $\delta=(0,0,-1,1)$ and $G_3=G_1+(0,0,0,1)$.
    This ``difficult'' case
    needs special treatment, as in the proof of the previous theorem.

    Take the following points:
    $A_0=(-1,-1,1,0)\in F_1$, $A_1=(1,1,0,1)\in G_1$, $A_2=(0,0,2,0)\in G_2$,
    $A_3=(1,1,0,2)\in G_3$. There exist some moments $t_0<t_1<t_2<t_3$ such that
    $\gamma(t_i)=A_i$. By the definition of $\WD$, we have
    $$
    \sum_{i=0}^2 |A_{i+1}-A_i|^4 \le \WD(\gamma)\sum_{i=0}^2|t_{i+1}-t_i| \le
    4\WD(\gamma),
    $$
    so $\WD(\gamma)\ge\frac14(10^2+7^2+10^2)=62.25$.
\end{proof}

\subsection{Proof of Theorem~\ref{th_4dsurj}}

\def\thetheorem{\ref{th_4dsurj}}
\begin{theorem}
    For any surjective curve $\gamma\colon[0,1]\to[0,1]^4$ we have
    $\WD_2(\gamma)\ge 30$.
\end{theorem}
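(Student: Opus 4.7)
My plan is to apply the chain inequality $\sum_{i=0}^{n-1}|A_{i+1}-A_i|^4 \le \WD_2(\gamma)\cdot(t_n - t_0)$ (used throughout Section~\ref{sec_low}) to sequences of points whose visits are forced by surjectivity. The arithmetic core is that the minimum Hamiltonian path weight over the $16$ vertices of $[0,1]^4$, under the edge weight $|v-w|^4$, equals exactly $15$: the $4$-cube graph is Hamiltonian, so there is a path of $15$ unit edges (each of weight $1$), and no lighter path exists since every edge of the complete graph on the vertex set has weight at least $1$.

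Set $\tau := \max_v \alpha_v$ to be the latest first-visit time of a vertex of $[0,1]^4$, and $\sigma := \min_v \beta_v$ the earliest last-visit time. Since $\gamma$ restricted to $[0,\tau]$ visits all $16$ vertices, the chain inequality along their first-visit order gives $\WD_2(\gamma)\cdot\tau \ge 15$. Reversing time and using last-visits gives the symmetric bound $\WD_2(\gamma)\cdot(1-\sigma) \ge 15$. Hence $\WD_2(\gamma) \ge 30$ immediately whenever $\tau \le 1/2$ or $\sigma \ge 1/2$.

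The remaining case is $\sigma < 1/2 < \tau$, in which there must exist a vertex $w_*$ visited only in $[0,\sigma]$ and a vertex $v_*$ visited only in $[\tau,1]$, so neither half of the curve sees all sixteen vertices. To close the bound $\WD_2(\gamma) \ge 30$ here I would combine the two half-Hamiltonian chains with an extra chain argument through the middle, for instance the chain $w_*, \gamma(t^*), v_*$ for some $t^* \in (\sigma,\tau)$, exploiting that $\gamma(t^*)$ is geometrically constrained both by the already-established Hamiltonian bounds on each half and by the forced cube distance between $w_*$ and $v_*$. The main obstacle is precisely this hard case: the two half-bounds alone only give $\WD_2 \ge 30/(1 + \tau - \sigma)$, which falls short of $30$ whenever $\tau - \sigma > 0$; closing the residual gap will require a careful geometric case analysis on the positions of $w_*$, $v_*$, and $\gamma(t^*)$ inside the cube, and possibly the use of lower-dimensional surjective lower bounds such as those of~\cite{NRS02,MS21} applied to suitable coordinate projections of the middle piece of $\gamma$.
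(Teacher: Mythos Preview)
Your argument is genuinely incomplete, and you already identify the gap yourself: in the case $\sigma < 1/2 < \tau$ the two half-interval Hamiltonian bounds give only $\WD_2(\gamma)\ge 30/(1+\tau-\sigma)$, which can be arbitrarily close to $15$. The suggested repair---a chain $w_*,\gamma(t^*),v_*$ through a midpoint, or projections to lower-dimensional cubes---does not obviously work. Nothing forces $w_*$ and $v_*$ to be far apart (they can be adjacent vertices, $|w_*-v_*|^4=1$), and $\gamma(t^*)$ is an arbitrary interior point of the cube, so the two legs of that three-point chain can each have fourth power well below what you need. Appealing to the surjective lower bounds of \cite{NRS02,MS21} for the ``middle piece'' is also not justified: the restriction of $\gamma$ to $[\sigma,\tau]$ need not be surjective onto anything, and no coordinate projection of it need be either. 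So the proposal, as it stands, does not prove the theorem.

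The paper's proof avoids your first/last-visit dichotomy entirely. It lists the $16$ vertices in traversal order $A_0=\gamma(t_0),\ldots,A_{15}=\gamma(t_{15})$, splits $t_{15}-t_0\le 1$ into three blocks of five steps, and by pigeonhole finds six consecutive vertices $A_s,\ldots,A_{s+5}$ with $t_{s+5}-t_s\le 1/3$. The key lemma is purely combinatorial: any $6$ vertices of $\{0,1\}^4$ contain either a pair of $4$-antipodes (contributing $|\cdot|^4=16$) or two pairs of $3$-antipodes (each contributing $|\cdot|^4=9$). A short case analysis on how these antipodal pairs sit inside the ordered sextuple then produces a chain with $\sum|A_{i+1}-A_i|^4\ge 10$ inside that span of length $\le 1/3$, giving $\WD_2\ge 30$. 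The moral is that localising to a short time window with only six vertices---rather than trying to use all sixteen over the whole interval---is what makes the argument close.
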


Vertices of the cube $[0,1]^4$ form the boolean cube $\{0,1\}^4$. In fact we will
obtain the dilation bound for any curve whose image contains $\{0,1\}^4$. We
say that vertices $v,v'\in\{0,1\}^4$ are \textit{3-antipodes} if they differ in
exactly $3$ coordinates, and \textit{4-antipodes}, if the differ in all $4$
coordinates.

\begin{lemma}
    Any set $S$ of $6$ elements of the boolean cube $\{0,1\}^4$ contains
    a pair of $4$-antipodes or two pairs of $3$-antipodes.
\end{lemma}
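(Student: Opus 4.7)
The plan is to partition $\{0,1\}^4$ into the parity classes $E$ (even weight) and $O$ (odd weight), each of size $8$. Since complementation preserves weight parity, each of $E, O$ splits into $4$ pairs of $4$-antipodes, while every pair of $3$-antipodes straddles the parity partition. Assuming $S$ contains no pair of $4$-antipodes, $S$ meets each of the $8$ antipodal pairs at most once, so $|S\cap E|\le 4$ and $|S\cap O|\le 4$; with $|S|=6$ the only possibilities are $(|S\cap E|,|S\cap O|) \in \{(4,2),\,(3,3),\,(2,4)\}$.

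The combinatorial engine will be a short lemma about the $4$-cube: two distinct same-parity vertices lie at Hamming distance $2$ or $4$, so they share at most $2$ common neighbors (none at distance $4$; exactly the two ``middles'' of the unique $4$-cycle through them at distance $2$), and consequently three distinct same-parity vertices share at most $1$ common neighbor (if $u_1,u_2$ were both common to $v_1,v_2,v_3$, then $v_3$ would be a common neighbor of $u_1,u_2$, forcing $v_3\in\{v_1,v_2\}$). This is really the only non-routine step, although it amounts to a one-line verification.

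With that lemma in hand the three cases become direct counts of $3$-antipode pairs, since any cross-parity pair $(e,o)$ has odd distance $1$ or $3$. In case $(4,2)$ with $S\cap O=\{o_1,o_2\}$, at most $2$ of the four $e\in S\cap E$ are common neighbors of both $o_j$'s, so the remaining $\ge 2$ each give $d(e,o_j)=3$ for some $j$. In case $(3,3)$, at most one $e\in S\cap E$ is a common neighbor of all three elements of $S\cap O$, so at least $2$ of the $e_i$ again produce a $3$-antipode pair. Case $(2,4)$ is symmetric to $(4,2)$.

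I expect the whole argument to fit on half a page. The main obstacle is simply formulating and proving the common-neighbor lemma cleanly; everything else is pigeonhole on the antipodal matchings inside $E$ and $O$.
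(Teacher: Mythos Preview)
Your argument is correct. The common-neighbor lemma is sound: two distinct same-parity vertices of $\{0,1\}^4$ are at Hamming distance $2$ or $4$ and hence have exactly $2$ or $0$ common neighbors, and your duality step (if $u_1,u_2$ were both adjacent to $v_1,v_2,v_3$ then $v_1,v_2,v_3$ would all be common neighbors of $u_1,u_2$) cleanly gives the ``three share at most one'' statement. The case split $(4,2)/(3,3)/(2,4)$ then goes through exactly as you describe, and since ``two pairs of $3$-antipodes'' in the paper's usage allows the pairs to share a vertex, producing two pairs with distinct $e$-components suffices.

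This is a genuinely different route from the paper. The paper argues by hand: assuming no $4$-antipodes it first finds one pair of $3$-antipodes by a short vertex-by-vertex case analysis anchored at $(0,0,0,0)$, then normalizes that pair to $(0,0,0,0),(1,1,1,0)$ and shows that forbidding any further $3$-antipode forces $S$ into the facet $\{x_4=0\}$, where the $3$-cube case is immediate. Your parity-class decomposition plus the neighbor-counting lemma replaces all of this ad hoc casework with a uniform pigeonhole argument; it is cleaner, symmetry-respecting, and would scale more gracefully to analogous statements. The paper's proof, by contrast, is more elementary in that it needs no auxiliary lemma and stays entirely at the level of explicit coordinates.
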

\begin{proof}
    We may assume that there are no $4$-antipodes in $S$.

    Let us prove that there is at least one pair of $3$-antipodes. 
    We may assume that $v=(0,0,0,0)\in S$. If $S$ has no $3$-antipodes, then all
    points in $S$ have at most two non-zero coordinates. So, $S$ contains a
    point, say, $w=(1,1,0,0)$ with two non-zero coordinates (because there
    are only $4$ vertices with one coordinate). But $w$ has $3$-antipodes
    $(0,0,1,0)$ and $(0,0,0,1)$, so that points can't lie in $S$. Then $S$
    contains one more point $u$ with two non-zero coordinates; as
    $u\ne(0,0,1,1)$, we may assume $u=(0,1,1,0)$. Then vertices $(1,0,0,0)$ and
    $(0,0,0,1)$ are forbidden, so $S$ must contain at least $4$ points with two
    non-zero coordinates. These points contain a pair of $4$-antipodes, that
    contradicts our assumption.

    So, we have a pair of $3$-antipodes, say, $v=(0,0,0,0)\in S$ and
    $v'=(1,1,1,0)\in S$. If there are no other $3$-antipodes, then all vertices
    in $S$ have at most two non-zero coordinates. But if the last coordinate of
    such point is non-zero, then this point is $3$-antipode to $v'$. So, $S$
    lies in the $3$-dimensional sub-cube $\{(x_1,x_2,x_3,x_4)\colon x_4=0\}$.
    And in the $3$-dimensional case the statement of Lemma is obvious.
\end{proof}

Now we can finish the proof of Theorem~\ref{th_4dsurj}. Let
$A_0 := \gamma(t_0),\ldots,A_{15} := \gamma(t_{15})$ be the sequence of points of $\{0,1\}^4$ in the
traversal order. Note that $(t_5-t_0)+(t_{10}-t_5)+(t_{15}-t_{10})\le 1$, so one
of the summands is at most $1/3$. Hence we have a sub-sequence
$A_s,\ldots,A_{s+5}$ of six points with $t_{s+5}-t_s\le 1/3$.

We apply Lemma to points $A_s,\ldots,A_{s+5}$ and arrive to
three possible cases: 1) there is a pair of $4$-antipodes 2) there are three
points that give two pairs of $3$-antipodes, 3) there
are four points that give two pairs of $3$-antipodes.

Denote $\omega := \WD_2(\gamma)$. 
In the case 1) we have a pair $A_i$, $A_j$ of $4$-antipodes,
$$
16 = |A_i-A_j|^4 \le \omega(t_j-t_i) \le \omega/3,
$$
In the case 2) we have $A_i,A_j,A_k$, $i<j<k$ and two of three
pairwise distances equal $\sqrt{3}$, so
$$
10 \le |A_i-A_j|^4 + |A_j-A_k|^4 \le \omega(t_k-t_i) \le \omega/3.
$$
Consider the last case of four points $A_i,A_j,A_k,A_l$. If the
antipode $B$ of $A_i$ is one of $A_j$, $A_k$, then
$$
10 \le |A_i-B|^4 + |B-t_l|^4 \le \omega(t_l-t_i) \le \omega/3.
$$
If $A_i$ and $A_l$ are antipodes, then $A_j$ and $A_k$ are antipodes too,
$$
11 \le |A_i-A_j|^4 + |A_j-A_k|^4 + |A_k-A_l|^4 \le \omega/3.
$$
In any case $\omega\ge30$ and the Theorem is proven.

\section{Further work}

There remain a lot of problems on the minimal $\ell_p$-dilation of Peano
multifractals, both theoretical and computational. Let us discuss some of them.

\paragraph{Promising directions.}

Our library allows to estimate minimal dilation for Peano
multifractals of arbitrary dimension $d$, multiplicity $m$ and genus $g$. The
number of possible variants grows rapidly as we increase any of the parameters
$d$, $m$, $g$; this raises two difficulties.  When we make a brute force search over
all pointed prototypes in some class, the number of pointed prototypes
corresponds to the ``width'' of the search, it is CPU-bounded. Given a pointed
prototype, we use a SAT-solver to approximate minimum dilation and the number of
possible curves corresponds to the ``depth'' of the search, it is memory (RAM)
bounded.  For example, for edge 3D trifractals of genus $2\times2\times2$ there
are not much pointed prototyes, but they are very
``deep''.  Conversely, there are too many pointed prototypes for edge 2D
monofractals of genus $6\times 6$; we have succeeded in the search among them
only because we could throw away prototypes with diagonal steps (and these
prototypes are not so deep).

Still it is possible to use the library in many interesting
cases. Here is a list of configurations to start with:
\begin{itemize}
    \item plane $7\times 7$ monofractals ($\ell_2$-dilation);
    \item 3D-bifractals of genus $2^3$ ($\ell_1$-dilation);
    \item 3D facet-gated trifractals of genus $2^3$ ($\ell_2$-dilation)
    \item 3D-monofractals of genus $3^3$;
    \item 4D facet-gated monofractals of genus $2^4$ ($\ell_1$, $\ell_\infty$-dilation);
\end{itemize}

Some of them will require optimizations in the algorithm or additional
constraints (on prototypes, base orientations, junctions, gates, etc) to reduce
the number of curves.  A good example is the narrow class of facet-gated curves.
Another interesting constraint is the \textit{symmetry} of a curve.

SAT-solvers may be applied in broader optimization, e.g., in the classes of
curves with fixed prototype~--- just add the clauses for adjacent fractions that
ensure the continuity of a curve.

\paragraph{Other metrics.}

Our algorithm for minimal $\ell_p$-dilation estimation in fact works for any
norms and other curve functionals.  This allows, e.g., to investigate curves in
boxes instead of cubes: $\gamma\colon[0,1]\to
[a_1,b_1]\times\cdots\times[a_d,b_d]$; it is equivalent to the consideration of
usual Peano curves in metric $\|x\| := \|(b-a)x\|_p$.

Another interesting family of metrics is based on the bounding boxes of the
sets $\gamma([t_1,t_2])$, see~\cite{HW10}. The structure of our algorithm
allows to minimize them analogously.

\paragraph{Acknowledgement.}
The authors express their gratitude to the anonymous referee for his careful
work and valuable suggestions. 

Data availability statement. The datasets generated during and/or analysed
during the current study are available in the github repository~\cite{gitYM}.

\appendix
\appendixpage

\section{Complexity of the search}

Fix three main parameters: dimension $d$, multiplicity $m$ and genus $g$. 
Let us consider the complexity of search of minimal curves among
Peano multifractals $(\gamma_1,\ldots,\gamma_m)$. For
simplicity, we suppose that all entrance/exit pairs $(\gamma_i(0),\gamma_i(1))$
are equivalent (i.e., may be transformed into each other by a base orientation) to some
pair $e=(p_0,p_1)$; we also assume that $e$ is equivalent to
$\widetilde{e}=(p_1,p_0)$. How many such curves are there for fixed $e$?

Let $N$ be the number of pointed prototypes with entrance/exit pairs equivalent
to $e$. Let there be $K$ ways to make the next step from $p_1\in[0,1]^d$, i.e.
to choose a neighbour cube and a next exit in it. For
example, if $e$ is an edge of a square, then $K=6$~--- there are $3$
neighbour squares and $2$ possible exits in each of them. If we consider
only facet-continuous edge curves, then $K=4$. So, the pointed prototype is
defined by $m$ words of length $(g-1)$ in $K$-alphabet. Hence, the
\textit{pointed prototype complexity} is bounded above by $m(g-1)\log_2K$, so, $N\le
K^{m(g-1)}$. This is of course a very rough bound, but to keep things simple we
will use it to measure the complexity.

Denote by $H$ the subgroup of base orientations that keep $e$.
Given a pointed prototype, on each fraction there are $m|H|$ ways to choose an
appropriate base orientation and curve index.
For $e$ being an edge of a square we have $|H|=2$, because there is an
identity map that keeps $e$ and there is a base orientation that consists of an
appropriate reflection and time reversal, that also keeps $e$.  Hence, curves
with fixed pointed prototype are defined by equations written as a word of length $mg$ in alphabet of
$m|H|$ symbols and the \textit{equations complexity} equals
$mg(\log_2m+\log_2|H|)$. Note that as $H$ is a subgroup of
$\mathbb Z_2^d\times S_d\times \mathbb Z_2$, we have $|H|\le d!2^{d+1}$.

Total number of curves equals $N(m|H|)^{mg}\le K^{m(g-1)}\cdot(m|H|)^{mg}$ and
the overall complexity may be written as
$$
\mbox{complexity} = \underbrace{m(g-1)\log_2K}_{\mbox{prototype}} +
\underbrace{mg(\log_2m+\log_2|H|)}_{\mbox{equations}}\;\mbox{bits}.
$$
The prototype complexity and equations complexity may serve as two indicators of
the complexity of the curve class and it makes sense to treat them separately.
In our algorithm, we make a brute force search over all pointed prototypes,
hence the prototype complexity roughly corresponds to the ``width'' of the
search, which is CPU-bounded. Given a
pointed prototype, we use a SAT-solver to approximate minimum dilation in
memory, so the equations complexity corresponds to ``depth'' of the search,
which is memory (RAM) bounded.

Let us consider some configurations that we are capable to deal with.

Plain monofractal edge curves of genus $5\times 5$. We have $K=4$ for facet-continuous
curves, and $K=6$ for arbitrary curves. In both cases $|H|=2$. It
gives the complexity $48+25$ bits in the facet-continuous case, and
$24\log_26+25\approx 62+25$ in the general case.

Diagonal $5\times5$ monofractals have $K=2$ in the facet-continous case and
$K=3$ in the general case. In both cases $|H|=4$. So we obtain 
$24+50$ bits and $24\log_23+50\approx 38+50$ bits, correspondingly.

To prove Theorem~\ref{th_YE} we had to process facet-continuous $6\times6$ curves.
The edge curves have compexity $70+36$.
The diagonal curves would have large equations complexity~-- $72$;
luckily enough, they do not exist!

Let us compare this with some cases that are too complex to process.

Edge $6\times 6$ curves have prototype complexity $\approx 90$, this is
beyond our capabilities. (So we had to use theoretical bound,
Theorem~\ref{th_diag}.)

The situation is even worse in dimension $3$.
Consider $3D$ monofractals of genus $3\times 3\times 3$ with entrance end
exit on the same edge. We have $K=21$, because there are $7$ neighbour cubes and
$3$ possible next exits in each of them. That gives us
prototype complexity $m(g-1)\log_2K = 27\log_221 \approx 119$; this is
too large for the brute-force search.

Another example is $3D$ edge trifractals of genus $2\times 2\times 2$. We have
$|H|=4$, that gives equations complexity $mg\cdot\log_212 \approx 86$ and it is too large
for our algorithm.

\end{document}